\numberwithin{equation}{section}
\newcommand{\Z}{\mathbb{Z}}
\newcommand{\Q}{\mathbb{Q}}
\newcommand{\QP}{\mathbb{Q}_{p}}
\newcommand{\ZP}{\mathbb{Z}_{p}}
\newcommand{\FP}{\mathbb{F}_{p}}
\newcommand{\OO}{\mathcal{O}}
\newcommand{\sums}[1]{\sum_{\substack{#1}}}
\newtheorem{lemma}{Lemma}[section]
\newtheorem{theorem}[lemma]{Theorem}
\newtheorem{proposition}[lemma]{Proposition}
\newtheorem{corollary}[lemma]{Corollary}
\newtheorem{mydef}[lemma]{Definition}
\newtheorem{remark}[lemma]{Remark}
\def\mylist#1 {\ifx!#1\else\makebox[4em][r]{#1} \expandafter\mylist\fi}
\title{\vspace{-\baselineskip}\sffamily\bfseries Local solubility of ternary cubic forms}
\author[1]{Golo Wolff\thanks{ETH Zürich, Switzerland, gwolff@student.ethz.ch}}
\date{October 20, 2024}
\begin{document}
\maketitle

\begin{abstract}
We consider cubic forms
\[
\phi_{a,b}(x,y,z) = ax^3 + by^3 - z^3
\]
with coefficients $a,b \in \Z$. We give an asymptotic formula for how many of these forms are locally soluble everywhere, i.e. we give an asymptotic formula for the number of pairs of integers $(a,b)$ that satisfy $1 \leq a \leq A$, $1 \leq b \leq B$ and some mild conditions, such that $\phi_{a,b}$ has a non-zero solution in $\QP$ for all primes $p$.
\end{abstract}
\section{Introduction}
\label{introduction}
The idea for this paper is based on a paper \cite{FRIW} by John Friedlander and Henryk Iwaniec. In their paper the authors give an asymptotic formula for the number of ternary quadratic forms $ax^2 + by^2 - z^2$ with integer coefficients $1\leq a \leq A$, $1\leq b \leq B$ that are locally soluble everywhere and thus have a non-zero rational solution due to the Minkowski local-global principle for indefinite quadratic forms. They restrict to counting pairs $(a,b)$ which are square-free, coprime and odd. We intend to modify their argument and prove a similar result in the cubic case.

Our setup is the following. Let $\phi_{a,b}(x,y,z) = ax^3 + by^3 - z^3$ have coefficients $a,b \in \Z$ and let $N(A,B)$ denote the number of integer pairs $(a,b)$ that satisfy the following conditions:
\begin{itemize}
    \item $1 \leq a \leq A$, $1 \leq b \leq B$,
    \item $\phi_{a,b}$ has a non-zero solution locally everywhere,
    \item $a,b$ are square-free, coprime and not divisible by three.
\end{itemize}
Then we get the following asymptotic. 
\begin{theorem}
\label{AsymptoticFormula}
    Let $0< \delta < 1$. For $A,B \geq \exp ((\log AB)^{\delta})$ we have
    \[
    N(A,B) = c_2 \cdot \frac{AB}{(\log A)^{\frac{1}{3}}(\log B)^{\frac{1}{3}}} \left(1 + \OO \bigg(\frac{1}{\log A} +\frac{1}{\log B}\bigg)\right),
    \]
    where $c_2$ is the constant
    $$
    c_2
    =
    \Big(\Gamma\Big(\frac{2}{3}\Big)\Big)^{-2}
    \prod_{p}
    \left(1+\frac{2}{3^{\omega_1(p)}p}\right)
    \left(1- \frac{1}{p}\right)^{\frac{4}{3}}.
    $$
\end{theorem}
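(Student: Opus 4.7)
The plan is to adapt the Friedlander--Iwaniec strategy of \cite{FRIW} to the cubic setting by replacing their Jacobi symbols with cubic residue characters on $\Z[\zeta_3]$. The first step is a prime-by-prime local analysis of $\phi_{a,b}$. At the archimedean place solubility is automatic since the form is indefinite. For any prime $p \nmid 3ab$ the reduction mod $p$ is a smooth cubic, so Chevalley--Warning together with Hensel lifting gives an automatic $\QP$-point. Since $3 \nmid ab$ by hypothesis, solubility at $p=3$ reduces to a fixed congruence on $(a,b)$ modulo a small power of $3$. At a prime $p \mid a$ with $p \neq 3$ (hence $p \nmid b$ by coprimality), solubility in $\QP$ turns out to be equivalent to $b$ being a cube in $\QP^\times$: this is automatic when $p \equiv 2 \pmod 3$ and equivalent to $\chi_p(b)=1$ when $p \equiv 1 \pmod 3$, where $\chi_p$ is a fixed non-trivial cubic residue character modulo $p$; a symmetric statement handles $p \mid b$.

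Detecting each cubic-residue condition by $\ifun[\chi_p(x)=1]=\tfrac{1}{3}(1+\chi_p(x)+\bar\chi_p(x))$ and writing $\omega_1(n) = \#\{p \mid n : p \equiv 1 \pmod 3\}$ yields
\[
N(A,B) = C_3 \sums{1 \le a \le A,\ 1 \le b \le B \\ (a,b)=1,\ 3 \nmid ab} \frac{\mu^2(a)\mu^2(b)}{3^{\omega_1(a)+\omega_1(b)}} \prod_{\substack{p \mid a \\ p \equiv 1\,(3)}}\bigl(1 + \chi_p(b) + \bar\chi_p(b)\bigr) \prod_{\substack{p \mid b \\ p \equiv 1\,(3)}}\bigl(1 + \chi_p(a) + \bar\chi_p(a)\bigr),
\]
with $C_3$ the local density at $3$. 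Expanding the two products splits $N(A,B)$ into a principal contribution (choosing the factor $1$ at every prime) and an off-diagonal remainder indexed by pairs of square-free divisors of $a$ and $b$. The principal part factorises, after Möbius inversion on $(a,b)=1$, into a product of two independent one-variable sums. Each is handled by the Selberg--Delange method: the Dirichlet series
\[
\sum_n \frac{\mu^2(n)}{3^{\omega_1(n)} n^s} = \prod_{p\equiv 1 (3)}\left(1+\frac{1}{3p^s}\right)\prod_{p\equiv 2(3)}\left(1+\frac{1}{p^s}\right)
\]
has a $(s-1)^{-2/3}$ singularity at $s=1$, as primes $\equiv 1 \pmod 3$ contribute $\tfrac{1}{2}\cdot\tfrac{1}{3} = \tfrac{1}{6}$ and primes $\equiv 2 \pmod 3$ contribute $\tfrac{1}{2}\cdot 1 = \tfrac{1}{2}$ to the exponent. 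This yields $\sim cA/(\Gamma(2/3)(\log A)^{1/3})$ and similarly for $B$, and regrouping Euler factors prime by prime (after incorporating the coprimality correction) reproduces the constant $c_2$ exactly in the form displayed in the theorem.

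The off-diagonal contribution has the shape
\[
\sums{a\le A,\,b\le B \\ (a,b)=1} \mu^2(a)\mu^2(b)\, \psi_a(b)\,\psi_b(a),
\]
where $\psi_a, \psi_b$ are non-trivial products of cubic residue characters depending only on the prime divisors of $a$ and $b$. I would recognise these as Hecke characters on the Eisenstein integers $\Z[\zeta_3]$ via cubic reciprocity and then split by conductor size: Siegel--Walfisz style bounds for cubic Hecke $L$-functions dispose of small conductors, while bilinear cancellation in the spirit of \cite{FRIW} controls the large-conductor range. The main obstacle is precisely this step. Friedlander--Iwaniec exploit quadratic reciprocity to swap $a \leftrightarrow b$ cleanly and then invoke the real-character large sieve; here one must work over $\Z[\zeta_3]$ with cubic Hecke characters and secure quantitative cancellation uniformly in the conductor, most likely via a cubic large sieve or Burgess-type estimate adapted to the bilinear regime. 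Establishing a power-of-log saving $(\log AB)^{-\delta'}$ for these twisted character sums in order to match the claimed error term is expected to absorb the bulk of the analytic work.
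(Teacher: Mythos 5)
Your overall strategy coincides with the paper's: detect local solubility prime by prime, use the automatic cases for $p\nmid 3ab$ and $p\equiv 2\pmod 3$, encode solubility at $p\equiv 1\pmod 3$ by the indicator $\tfrac13(1+\chi_p+\bar\chi_p)$, expand the products, extract the principal part by a Selberg--Delange-type mean-value theorem, and beat the off-diagonal by a mix of Siegel--Walfisz and bilinear cancellation. The paper does exactly this, using \cite[Thm.\ 13.2]{KOUKOU} for the one-variable mean value and \cite[Prop.\ 4.3]{KoymansRome} as the ready-made bilinear large sieve over $\Z[\zeta_3]$. So the skeleton is right.

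Three points where your sketch falls short of what the proof actually needs. First, the off-diagonal decomposition is genuinely finer than ``small versus large conductor.'' After expanding $\prod_{p\mid a_1}(1+\chi_p(b)+\bar\chi_p(b))$ the paper writes $a_1=d_1d_2d_3$, $b_1=e_1e_2e_3$ and works with a four-index character sum. It introduces a threshold $V$ (a power of $\log AB$) and splits into: (1) some index $>V$ \emph{and} some variable in its argument $>V$ --- here the Koymans--Rome bilinear estimate applies; (2) some index $>V$ but all argument variables $\leq V$ --- the bilinear bound is useless here because the $z$-sum is too short, so one bounds trivially using that there are at most $V^{O(1)}$ choices; (3a/3b) all indices $\leq V$, at least one $>1$, where one needs yet another split at $B^{1/k}$: when the long variable $e_1$ is $>B^{1/k}$ the Siegel--Walfisz/Koukoulopoulos saving kicks in, and when everything is $\leq B^{1/k}$ one again bounds trivially. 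Your description collapses this to two regimes and omits the trivial-bound ranges (2) and (3a), which are not decorative --- without the $B^{1/k}$ cut the single-character saving $(\log(B/e_2e_3b_2))^{-C}$ degenerates as $e_2e_3b_2\to B$.

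Second, Chevalley--Warning does not apply to a ternary cubic form: it requires the number of variables to exceed the degree, and here both equal $3$. For $p\geq 11$ one needs the Hasse--Weil bound to guarantee at least $4$ projective points on the smooth cubic, hence one with $z\neq 0$ so that Hensel's lemma applies; for $p=2,5,7$ the paper checks all residue pairs directly. Third, the $p=3$ condition cannot simply be pulled out as a prefactor $C_3$. It is a congruence on $(a,b)$ modulo $27$, and it must be resolved by orthogonality of characters of $(\Z/27\Z)^\ast$; the non-trivial characters then contribute to the error and must be absorbed in the same Siegel--Walfisz/Koukoulopoulos step that produces the main term --- in fact the paper has to distinguish three sub-cases according to whether $\chi'$ is trivial, trivial on the index-two subgroup $\{n\equiv 1\pmod 3\}$ only, or non-trivial there, because the exponent $\alpha$ in \cite[Thm.\ 13.2]{KOUKOU} changes between $2/3$, $-1/3$, and $0$. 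Your factor $C_3$ would emerge as $\#\{\text{admissible pairs mod }27\}/\phi(27)^2$ only after this character analysis. Finally, your ``M\"obius inversion on $(a,b)=1$'' is not how the paper proceeds; it keeps $\mu^2(3ab)$, which packages squarefree-ness and coprimality simultaneously and makes the inner $n$-sum a multiplicative function $g_m(n)$ of $n$ depending on $m$ --- this is cleaner than inverting the coprimality and matches the form required by \cite[Thm.\ 13.2]{KOUKOU}.
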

For the cubic case of this paper we have no analogue to the local-global principle used by Friedlander and Iwaniec in the quadratic case. With the tools of our paper it is not possible to infer that $\phi_{a,b}$ has a rational solution, even if it is locally soluble everywhere. As an immediate consequence of Theorem \ref{AsymptoticFormula}, we only achieve an upper bound on the number of forms that have a global solution.
\begin{corollary}
    Let $0<\delta<1$ and $A,B \geq \exp ((\log AB)^{\delta})$. Then the number of cubic forms $\phi_{a,b}$ that have a global solution is bounded from above by 
    $$
    c_3 \cdot \frac{AB}{(\log A)^{\frac{1}{3}}(\log B)^{\frac{1}{3}}} \left(1 + \frac{1}{\log A} +\frac{1}{\log B}\right),
    $$ 
    if we assume the same restrictions on the coefficients $a,b$ as in the definition of $N(A,B)$. The leading constant $c_3$ may depend on $\delta$.
    
    Furthermore
    $$
    \limsup_{A,B \rightarrow \infty}\; N(A,B)\bigg(\frac{AB}{(\log A)^{\frac{1}{3}}(\log B)^{\frac{1}{3}}}\bigg)^{-1} \leq c_2.
    $$
\end{corollary}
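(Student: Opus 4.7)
The plan is to deduce the corollary directly from Theorem \ref{AsymptoticFormula} by combining two ingredients: that global solubility forces local solubility at every place, and that the big-$O$ error term in Theorem \ref{AsymptoticFormula} can be absorbed into a single leading constant.

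First I would observe that if $\phi_{a,b}(x,y,z)=0$ admits a nonzero rational solution, then via the embeddings $\Q \hookrightarrow \QP$ for every prime $p$ and $\Q \hookrightarrow \R$, that same triple is a nonzero solution in every local completion. Consequently, under the square-freeness, coprimality and non-divisibility by three restrictions imposed in the definition of $N(A,B)$, the number of pairs $(a,b) \in [1,A]\times[1,B]$ for which $\phi_{a,b}$ has a nontrivial rational solution is bounded above by $N(A,B)$. So the task reduces to an upper bound for $N(A,B)$ itself.

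Next I would unpack the implied constant in Theorem \ref{AsymptoticFormula}. That theorem provides a constant $K = K(\delta) > 0$ such that, in the range $A, B \geq \exp((\log AB)^{\delta})$,
\[
N(A,B) \leq c_2 \cdot \frac{AB}{(\log A)^{1/3}(\log B)^{1/3}}\left(1 + K\left(\frac{1}{\log A} + \frac{1}{\log B}\right)\right).
\]
Setting $c_3 := c_2 \max(1,K)$ absorbs $K$ into the leading constant and produces the first bound in the corollary; the resulting $c_3$ depends on $\delta$ precisely because $K$ does. For the $\limsup$ assertion, I would note that as $A, B \to \infty$ the error $\frac{1}{\log A} + \frac{1}{\log B}$ tends to zero, so dividing the asymptotic of Theorem \ref{AsymptoticFormula} by $\frac{AB}{(\log A)^{1/3}(\log B)^{1/3}}$ shows that the ratio converges (not just $\limsup$-wise) to $c_2$, yielding the stated bound.

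There is no genuine obstacle here; the corollary is essentially a packaging statement. The only point that deserves a line of comment in the write-up is that $c_3$ must be allowed to depend on $\delta$ because the implicit constant in the error term of Theorem \ref{AsymptoticFormula} depends on $\delta$, and the one-sidedness of the bound (upper, not asymptotic) is a genuine limitation: without a Hasse principle for the cubic $\phi_{a,b}$, one cannot replace the inequality by an equality.
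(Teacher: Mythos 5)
Your proof is correct and is exactly the intended argument; the paper itself gives no written proof, calling the corollary an ``immediate consequence'' of Theorem \ref{AsymptoticFormula}, and your three steps (a rational zero embeds into every completion, so the count of globally soluble forms is at most $N(A,B)$; unpack the big-$O$ constant and set $c_3 = c_2\max(1,K(\delta))$; the error term tends to zero) supply precisely what that phrase leaves unsaid.

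One small point worth a sentence in a careful write-up: the $\limsup$ assertion is only literally a consequence of Theorem \ref{AsymptoticFormula} if the $\limsup_{A,B\to\infty}$ is understood to range over pairs $(A,B)$ that eventually satisfy the hypothesis $A,B \geq \exp((\log AB)^{\delta})$ for some fixed $\delta$; for sequences with, say, $A$ growing only poly-logarithmically in $B$, the theorem gives no information. This restriction is implicit (the $\limsup$ sits inside the same corollary whose first clause imposes that range condition), and under it your observation that the ratio in fact converges to $c_2$ --- not merely has $\limsup$ at most $c_2$ --- is a sharper remark than the stated claim, which is fine since an equality implies the inequality. Your comment on why $c_3$ may depend on $\delta$ and on the one-sidedness of the bound in the absence of a Hasse principle is exactly the right framing.
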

Note that the condition $A,B \geq \exp ((\log AB)^{\delta})$ intuitively means that $A,B$ are not extremely different in size. It implies, for example, that $(\log B)^{\sfrac{1}{\delta}} \geq \log A \geq (\log B)^{\delta}$ and it is mainly required for the way that we have bounded the error terms in the proof of Theorem \ref{AsymptoticFormula}.

\section*{Acknowledgements}
I thank my supervisor Peter Koymans, not only for spending many hours on helping me with this paper, but also for teaching me analytic number theory and mathematics in general.

I wish to acknowledge everybody. It is society as a whole which provides us with the possibility for abstract mathematical research. 
\section{Tools}
This section contains two tools used in the proof of Theorem \ref{AsymptoticFormula} and its preliminary lemmas.

The first tool is a large sieve result taken from \cite[Prop. 4.3]{KoymansRome} and it requires a setup that will be repeated here, because we have to verify this setup in the proof of Lemma \ref{DoubleOscillation}.

Let $K$ be a number field and let $l$ be a prime number. If $\mathfrak{f}$ is an ideal, we write $S_{\mathfrak{f}}$ for the subset of $\alpha \in \mathcal{O}_K$ coprime with $\mathfrak{f}$. We also write $N(w)$ for the absolute norm of an element. Let $M \geq 1$ be an integer. Suppose that we are given a map $\gamma : S_{M \mathcal{O}_K} \times S_{M \mathcal{O}_K} \longrightarrow \{0\} \cup \{\zeta_l^i : i = 0,...,l-1\}$ and a subset $A_{\text{bad}}$ of $\Z_{\geq 0}$ satisfying the following properties.

(P1) Multiplicativity: we have
\begin{gather*} \allowdisplaybreaks
\gamma(w, z_1z_2) = \gamma(w, z_1)\gamma(w, z_2) \text{ for all } w, z_1 \text{ and } z_2,
\\
\gamma(w_1w_2, z) = \gamma(w_1,z)\gamma(w_2,z)
\text{ for all } w_1, w_2 \text{ and } z.
\end{gather*}

(P2) Periodicity: if $z_1,z_2,w \in S_{M\mathcal{O}_K}$ satisfy $z_1 \equiv z_2$ mod $N(w)$ and $z_1 \equiv z_2$ mod $M$, then we have $\gamma(w,z_1) = \gamma(w,z_2)$. Furthermore, if $N(w) \notin A_{\text{bad}}$, then we have 
\[
\sum_{\substack{
\xi \bmod M N(w)\\
\gcd(\xi,M) = (1)
}}
\gamma(w,\xi)
=
0
\]

(P3) Bad count: we have 
\[
\sum_{\substack{
n \in A_{\text{bad}}\\
n \leq X
}}
1
\leq
C_1X^{1-C_2}
\]
for some absolute constants $C_1 > 0$ and $0 < C_2 < 1$.

Decompose $\mathcal{O}_K^{\ast} = T \oplus V$, where $T$ is torsion and $V$ is free. Such a decomposition is not unique, but we will fix one such decomposition. Fix a fundamental domain $\mathcal{D} \subseteq \mathcal{O}_K$ as in \cite[Section 3.3]{KoymansMilovic} for the action\footnote{For the number field $\Q[\zeta_3]$ required in Lemma \ref{DoubleOscillation} $V$ is trivial, hence there is only one choice for $\mathcal{D}$. The reader may skip looking up \cite[Section 3.3]{KoymansMilovic}.} of $V$ on $\mathcal{O}_K$. 

We will consider bilinear sums of the type
\[
B(X,Y, \delta, \epsilon, t_1, t_2) 
= \sum_{\substack{
w \in t_1\mathcal{D}(X) \\
w \equiv \delta \bmod M
}}
\sum_{\substack{
z \in t_2\mathcal{D}(Y) \\
z \equiv \epsilon \bmod M
}}
\alpha_w\beta_z\gamma(w,z)
\]
where $(\alpha_w)_w$ and $(\beta_z)_z$ are sequences of complex numbers bounded in absolute value by 1, $\delta$ and $\epsilon$ are invertible congruence classes modulo $M$, $t_1$ and $t_2$ are fixed elements of $T$ (so $t_i\mathcal{D}$ is a translate of the fundamental domain) and $X, Y \geq 2$ are real numbers. Here we use the notation $(t_i\mathcal{D})(X)$ for the subset of $\alpha \in t_i\mathcal{D}$ with $N(\alpha)\leq X$. Then we have the following proposition, taken from \cite[Prop 4.3]{KoymansRome}.
\begin{proposition}\label{AbstractLargeSieve}
    Assume $X\leq Y$. Then we have
    \[
    |B(X,Y, \delta, \epsilon, t_1, t_2)|
    \ll
    (X^{-\frac{C_2}{3n}}+Y^{-\frac{1}{6n}})XY(\log XY)^{C_K},
    \]
    where $n = [K : \Q]$ and $C_K$ is a constant depending only on $K$. The implied constant depends only on $K$, $M$ and the constants $C_1$, $C_2$.
\end{proposition}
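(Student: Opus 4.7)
The plan is to establish this as a large-sieve / bilinear-form estimate in the style of Vinogradov: Cauchy--Schwarz linearizes one variable, the multiplicativity (P1) collapses the two copies of $\gamma$ into one, the periodicity and cancellation of (P2) supply the Pólya--Vinogradov saving, and the bad count (P3) handles the residual indices where no cancellation is guaranteed.

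Since $X \leq Y$, I would first apply Cauchy--Schwarz in the $z$-variable to obtain
\[
|B(X,Y,\delta,\epsilon,t_1,t_2)|^2
\;\ll\;
Y\sum_{\substack{z\in t_2\DD(Y)\\ z\equiv\epsilon \bmod M}}
\Bigl|\sum_{\substack{w\in t_1\DD(X)\\ w\equiv\delta \bmod M}}\alpha_w\,\gamma(w,z)\Bigr|^2.
\]
Opening the square and exchanging summations produces a sum over pairs $(w_1,w_2)$ with inner sum $\sum_z\gamma(w_1,z)\overline{\gamma(w_2,z)}$. Because $\gamma$ takes values in $l$-th roots of unity, multiplicativity (P1) in the first argument gives $\gamma(w_1,z)\overline{\gamma(w_2,z)}=\gamma(w_1w_2^{l-1},z)$ whenever neither factor vanishes; writing $w=w_1w_2^{l-1}$ collapses the inner sum to $\sum_z\gamma(w,z)$ over the box $z\in t_2\DD(Y)$ with $z\equiv\epsilon\bmod M$.

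For fixed $w$, property (P2) tells us that $\gamma(w,\cdot)$ is periodic modulo $MN(w)$ and, when $N(w)\notin A_{\text{bad}}$, sums to zero over a complete set of invertible residues. I would then expand it in additive characters of $\OO_K/(MN(w))$ and apply Poisson summation in $K$ to the translate $t_2\DD(Y)$; a standard lattice-point analysis of the dual box, combined with the vanishing of the zero-frequency term, yields a Pólya--Vinogradov-type estimate $\sum_z\gamma(w,z)\ll N(w)^{1/2}(\log XY)^{C_K'}$. For the remaining $w$ with $N(w)\in A_{\text{bad}}$ I would simply use the trivial bound $\ll Y$, and (P3) guarantees that these contribute at most $\ll X^{1-C_2}$ in count.

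The final step is to reassemble the bounds on the pairs $(w_1,w_2)$. The diagonal $w_1=w_2$ contributes $\ll XY^2$ to $|B|^2$, which is dominated by the remaining terms. The good off-diagonal, using the Pólya--Vinogradov bound above, produces after summation in $(w_1,w_2)$ the saving $Y^{-1/(6n)}XY$; the bad off-diagonal, using (P3), produces the saving $X^{-C_2/(3n)}XY$. Their sum gives the claimed bound. The main technical obstacle will be the Poisson/completion-of-sum step over a translate of $\DD$ rather than an interval: this requires careful lattice-point geometry in $K$ and is the place where the constant $C_K$ enters the final estimate. The exponents $3n$ and $6n$ in the denominators arise from the optimization of the cutoff that separates the Pólya--Vinogradov and bad-set contributions, together with the final square root introduced by the initial Cauchy--Schwarz.
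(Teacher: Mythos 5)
Note first that the paper does not prove this proposition: it is quoted from Koymans--Rome (their Proposition~4.3) and used as a black box, so there is no in-paper argument to compare against; what follows is an assessment of your outline on its own terms.

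Your skeleton (Cauchy--Schwarz in $z$, collapse via (P1), exploit (P2)/(P3), reassemble) is the standard shape, but the central step has a genuine gap. After opening the square you must bound $\sum_z \gamma(w_1 w_2^{l-1},z)$, and the modulus $N(w_1 w_2^{l-1})$ can be as large as $X^l$. A P\'olya--Vinogradov bound --- which in the number-field setting is really of size $N(w)^{n/2}(\log)^{C}$, since $\mathcal{O}_K/N(w)\mathcal{O}_K$ has $N(w)^n$ elements, not the $N(w)^{1/2}$ you wrote --- is worse than the trivial bound $Y$ once the modulus is large, and in the balanced range $X\asymp Y$ this ruins the estimate for any $l\ge 2$. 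Concretely, take $n=2$ and $l=3$ (the case used in Lemma~\ref{DoubleOscillation}); even granting your $N(w)^{1/2}$, the good off-diagonal gives
\[
|B|^2 \ll Y\sum_{w_1,w_2} N(w_1 w_2^{2})^{1/2}(\log XY)^{C} \asymp YX^{7/2}(\log XY)^{C},
\]
so $|B|\ll X^{7/4}Y^{1/2}(\log XY)^{C/2}$; at $X=Y$ this is $\asymp Y^{9/4}$, which exceeds the trivial bound $XY=Y^2$ and is a full power above the target $\asymp XY\cdot Y^{-1/12}=Y^{23/12}$. Your claim that the good off-diagonal ``produces the saving $Y^{-1/(6n)}XY$'' therefore does not follow. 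What is missing is a dichotomy on the size of $X$ relative to $Y$: completion does give a saving when $X$ is a small power of $Y$, but the balanced range requires a different device --- in Heath-Brown's quadratic large sieve and its descendants (including the Koymans--Milovi\'c/Koymans--Rome line) this is a reflection or duality step that swaps the roles of the two variables, and the exponents $C_2/(3n)$ and $1/(6n)$ come precisely from balancing the two regimes of such a split. A secondary gap: your "bad" contribution requires counting pairs $(w_1,w_2)$ with $N(w_1 w_2^{l-1})\in A_{\text{bad}}$, which does not follow directly from (P3) since many pairs can share the same norm; this needs a separate counting lemma.
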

The second tool is a theorem on sums of multiplicative functions taken from \cite[Theorem 13.2, p. 134]{KOUKOU}.
\begin{theorem}\label{Theorem 13.2 KOUKOU}
    Let $Q > 2$ be a parameter and let $f$ be a multiplicative function such that there exists $\alpha \in \mathbb{C}$ with
    \begin{gather}
    \label{KOKOU condition}
    \sum_{p \leq x} f(p)\log(p) 
    =
    \alpha x + \OO_A\left(\frac{x}{\log(x)^A}\right) \quad \quad (x \geq Q)
    \end{gather}
    for all $A > 0$. Moreover, suppose for all $n$ that $|f(n)| \leq \tau_k(n)$ for some positive real number $k$. Fix $\epsilon > 0$ and $J \in \Z_{\geq 1}$. Then we have
    \[
    \sum_{n \leq x} f(n) 
    = 
    x \sum_{j = 0}^{J-1}
    c_j
    \frac{\log(x)^{\alpha - j - 1}}{\Gamma(\alpha-j)}
    +
    \OO\left(\frac{x(\log Q)^{2k+J-1}}{(\log x)^{J+1-Re(\alpha)}}\right)
    \]
    for $x \geq \exp{((\log Q)^{1+\epsilon})}$ and some explicit constants $c_j$. The implied constant depends at most on $k,J, \epsilon$ and the implied constant in equation \eqref{KOKOU condition} for $A$ large enough in terms of $k, J $ and $\epsilon$ only. Furthermore, we have
    \[
    c_0 = \prod_{p}\left(1 + \frac{f(p)}{p} + \frac{f(p^2)}{p^2} + ...\right)\left(1- \frac{1}{p}\right)^\alpha
    \]
    and $c_j \ll_{j,k} (\log Q)^{j+2k}$.
\end{theorem}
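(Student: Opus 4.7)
The plan is to prove Theorem \ref{Theorem 13.2 KOUKOU} by the Selberg--Delange method. The hypothesis \eqref{KOKOU condition} is a prime-number-theorem type input for $f$, and its purpose is to guarantee that the Dirichlet series $F(s) = \sum_{n \geq 1} f(n) n^{-s}$ has the same kind of branch point at $s = 1$ as $\zeta(s)^{\alpha}$. Writing $G(s) = F(s)\zeta(s)^{-\alpha}$, one then expects $G$ to extend holomorphically past $\Real(s) = 1$, and the whole asymptotic expansion should fall out of the Taylor coefficients of $G$ at $s = 1$.

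The first step is the analytic continuation of $G$. For $\Real(s) > 1$, multiplicativity of $f$ gives
\[
\log F(s)
=
\sum_p \frac{f(p)}{p^s}
+
\sum_p \sum_{k \geq 2} \frac{f(p^k)}{k p^{ks}}.
\]
The prime-power double sum is holomorphic for $\Real(s) > 1/2$ thanks to the divisor bound $|f(n)| \leq \tau_k(n)$. Partial summation against \eqref{KOKOU condition} turns the sum over primes into $\alpha \log(1/(s-1))$ plus a function continuing into the Vinogradov-type zero-free region of $\zeta$, so that $G$ is holomorphic in a region $\Real(s) \geq 1 - c/\log(|\Imag(s)|+2)$ with polynomial growth in $|\Imag(s)|$ and with explicit dependence on $Q$.

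Next I would apply a smoothed Perron formula and deform the contour into a Hankel-type contour wrapping the branch point $s = 1$, together with two short vertical segments inside the zero-free region. Writing $\zeta(s)^{\alpha} = (s-1)^{-\alpha} Z(s)$ with $Z(s) = ((s-1)\zeta(s))^{\alpha}$ holomorphic and non-vanishing near $s=1$, and expanding $Z(s)G(s)/s = \sum_{j \geq 0} a_j (s-1)^j$ to order $J$, the change of variable $s = 1 + w/\log x$ combined with Hankel's identity
\[
\frac{1}{\Gamma(\alpha - j)}
=
\frac{1}{2\pi i}\int_{\mathcal{H}} w^{j - \alpha}\, e^{w}\, dw
\]
produces precisely the main term $x\sum_{j = 0}^{J-1} c_j (\log x)^{\alpha - j - 1}/\Gamma(\alpha - j)$, with $c_j = a_j$. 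In particular $c_0 = G(1) = \lim_{s \to 1}F(s)(s-1)^{\alpha}$ works out to the advertised Euler product, and the bounds $c_j \ll_{j,k}(\log Q)^{j + 2k}$ are inherited from the $Q$-dependence of the Taylor coefficients of $G$.

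The main obstacle is the first step: producing a clean analytic continuation of $G$ with explicit control on its growth and on its dependence on $Q$, since this is what drives both the shape of the $c_j$ and the size of the error term. Once that is done, the remaining contour estimates are routine; the hypothesis $x \geq \exp((\log Q)^{1 + \epsilon})$ is exactly what is needed to absorb the $Q$-dependence of the various intermediate bounds into the stated error of size $x(\log Q)^{2k + J - 1}/(\log x)^{J + 1 - \Real(\alpha)}$.
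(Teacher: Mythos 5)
The paper does not actually prove this theorem --- it is quoted verbatim from \cite[Theorem 13.2, p.~134]{KOUKOU} as a black-box tool --- so there is no in-paper argument to compare against; the relevant comparison is with the cited source. Your outline is the classical Landau--Selberg--Delange contour method, and that is the right circle of ideas: factor $F(s)=\zeta(s)^\alpha G(s)$, establish holomorphy and growth bounds for $G$ near $s=1$, and run a truncated Hankel contour through $s=1+w/\log x$ to produce the terms $(\log x)^{\alpha-j-1}/\Gamma(\alpha-j)$.

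The genuine gap is the one you yourself name as ``the main obstacle'' and then set aside: the explicit and uniform dependence on $Q$. That is not end-of-proof bookkeeping; it is the whole point of this particular formulation of LSD and the reason the paper invokes it rather than the textbook Selberg--Delange theorem (it is applied to $f(n)=\chi'(n)g_m(n)$, whose modulus $3d_2d_3$ is allowed to grow like a power of $\log B$, so a version with fixed implied constants would be useless). Since \eqref{KOKOU condition} controls $\sum_{p\le x}f(p)\log p$ only for $x\ge Q$, the primes below $Q$ are constrained by nothing beyond the divisor bound $|f(p)|\le\tau_k(p)=k$, and this is exactly what allows $G$ and its first $J$ derivatives at $s=1$ to be as large as powers of $\log Q$. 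Tracking that growth through the Hankel contour is what produces the bounds $c_j\ll_{j,k}(\log Q)^{j+2k}$, the factor $(\log Q)^{2k+J-1}$ in the error, and the constraint $x\ge\exp((\log Q)^{1+\epsilon})$ (which ensures $\log Q\le(\log x)^{1/(1+\epsilon)}$ so that the $Q$-powers are eventually dominated by $\log x$). Without carrying this out, your sketch does not yet give the stated estimate. A smaller inaccuracy worth flagging: \eqref{KOKOU condition} provides only a power-of-$\log$ saving, so it cannot place the continuation of $G$ inside a ``Vinogradov-type zero-free region''; this is harmless here, since the target error is itself only a power of $\log x$ and the classical zero-free region for $\zeta$ is enough to anchor the truncated Hankel contour.
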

\begin{remark}\label{convention}
    Note that the above theorem is typeset under the convention that ``$\frac{1}{\infty} = 0$". If for example $\alpha \in \Z_{\leq 0}$, then poles of the gamma function appear in the denominator of every summand of the main term. In this case we consider the sum to be empty and the main term to vanish.
\end{remark}
\section{Proof of the theorem}
The proof of the theorem will need four lemmas. Two of the lemmas detect solutions of $\phi_{a,b}$ in $\QP$, the other two lemmas are character oscillation results.
\subsection{Lemmas to detect solutions}
The following two lemmas answer the question how to detect a solution of $\phi_{a,b}$ in $\QP$. The first lemma implies that for all primes $p \nmid ab$, $\phi_{a,b}$ has a solution in $\QP$, except for the special case $p=3$. The second lemma states that for primes $p \mid ab$ we can express the solubility of $\phi_{a,b}$ in $\QP$ through cubic characters.
\begin{lemma}
\label{p nmid ab}
    Let $(a,b)$ be any pair of integers with $3 \nmid ab$. Then $\phi_{a,b}$ has a non-zero solution
    \begin{enumerate}[i)]
        \item in $\QP$ if $p \nmid ab$ and $p \neq 3$,
        \item in $\Q_3$ if and only if $(a,b)$ is congruent modulo 27 to an ``admissible pair" from the ``list of admissible pairs" compiled in the appendix \ref{List of admissable pairs}.
    \end{enumerate}
\end{lemma}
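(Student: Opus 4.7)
\textbf{For part (i)}, the plan is to produce a nontrivial solution modulo $p$ and then lift by Hensel's lemma, splitting into cases according to the residue of $p$ modulo $3$. If $p=2$ or $p\equiv 2\pmod 3$, cubing is a bijection on $\FP$, so setting $x_0=0$ and solving $by_0^3=z_0^3$ produces a nontrivial solution mod $p$. If instead $p\equiv 1\pmod 3$ (hence $p\geq 7$), the projective cubic $aX^3+bY^3=Z^3$ is smooth over $\FP$ because $p\nmid 3ab$; it is therefore a genus-one curve, and the Hasse--Weil bound gives at least $p+1-2\sqrt{p}>0$ points. At any such nonzero point $(x_0,y_0,z_0)$ some coordinate is a unit mod $p$, and because $p\neq 3$ and $p\nmid ab$ the corresponding partial derivative ($3ax_0^2$, $3by_0^2$, or $-3z_0^2$) is a unit mod $p$, so Hensel's lemma lifts to a solution in $\ZP$.

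\textbf{For part (ii)}, I would reduce solubility in $\Q_3$ to a check modulo $27$. Any nontrivial $\Q_3$-solution can be scaled to a primitive triple $(x_0,y_0,z_0)\in\Z_3^3$ with at least one coordinate a unit. After relabelling the coordinate with valuation $0$, say $v_3(z_0)=0$, view $\phi_{a,b}$ as the one-variable polynomial $F(Z)=ax_0^3+by_0^3-Z^3$. Then $F'(z_0)=-3z_0^2$ has $3$-adic valuation exactly $1$, so Hensel's lemma lifts any $z_0$ with $F(z_0)\equiv 0\pmod{27}$ (equivalently $v_3(F(z_0))\geq 3>2v_3(F'(z_0))$) to a root in $\Z_3$. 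Conversely, any $\Q_3$-solution reduces mod $27$ to such a primitive triple satisfying $\phi_{a,b}\equiv 0\pmod{27}$. Hence $\phi_{a,b}$ is soluble in $\Q_3$ if and only if there exists a primitive $(x_0,y_0,z_0)\bmod 27$ with $ax_0^3+by_0^3\equiv z_0^3\pmod{27}$, a property depending only on $(a,b)\bmod 27$. The admissible pairs are by definition precisely those $(a,b)\bmod 27$ for which such a triple exists, so the content of part (ii) beyond this reduction is the exhaustive enumeration compiled in the appendix; this bookkeeping, together with making the Hensel lifting at $p=3$ airtight in the non-generic direction, is the main obstacle of the proof.
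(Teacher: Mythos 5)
Your argument is correct, and for part (i) it is a genuine improvement over the paper's route. The paper applies the Hasse--Weil bound only for $p\ge 11$ (to guarantee at least $4$ projective points and hence one with $z_0\ne 0$), and then falls back on a computer search over $\FF_p^2$ for the small primes $p\in\{2,5,7\}$. You instead split on the residue of $p$ mod $3$: for $p\equiv 2\bmod 3$ (which subsumes $p=2$ and $p=5$) cubing is a bijection of $\FF_p$, so $(0,1,b^{1/3})$ is an explicit smooth point, and for $p\equiv 1\bmod 3$ (which already forces $p\ge 7$) a single Hasse--Weil point suffices because you lift in whichever coordinate direction is a unit rather than insisting on $z_0\ne0$. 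This removes the machine verification entirely for part (i) and makes the Hensel step uniform. Your valuation bookkeeping is also correct: if $x_0$ is a unit mod $p$ and $p\nmid 3a$ then $3ax_0^2$ is a unit, and similarly for $y_0$, $z_0$, so the one-variable derivative is a unit and Hensel applies.

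For part (ii) your argument coincides with the paper's: scale to a primitive triple in $\Z_3^3$, dehomogenize in the unit coordinate, observe $v_3(F'(z_0))=1$ so $v_3(F(z_0))\ge 3>2v_3(F'(z_0))$ allows the strong form of Hensel, and run the same scaling in the converse direction to reduce mod $27$. You correctly note that the remaining content is the finite enumeration of admissible pairs $(a,b)\bmod 27$, which the paper delegates to the appendix; that part is genuinely a computation and there is no slicker way around it.
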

\begin{proof}[Proof of Lemma \ref{p nmid ab}] We first prove statement $i)$ for primes $p \geq 11$ and afterwards for the smaller primes. The strategy is to show that $\phi_{a,b}$ has a solution in $\FP$ that can be lifted to a solution in $\ZP$ with Hensel's lemma \cite[Theorem 2.1]{conrad}.
    
    For $p \nmid ab$ and $p \geq 11$, $\phi_{a,b}$ is an non-singular cubic curve of genus 1, so we may use the Hasse-Weil bound \cite{WeilBound} to infer that there are at least 4 projective solutions over $\FP$. For the cubic $\phi_{a,b}$ this implies that there is at least one solution $(x_0, y_0, z_0) \in \FP^3$ with $z_0 \neq 0$. Pick a lift of this triple $(\Tilde{x}, \Tilde{y}, \Tilde{z}) \in \ZP$ and define $f(z) \coloneqq \phi_{a,b}(\Tilde{x}, \Tilde{y}, z) = a\Tilde{x}^3 + b\Tilde{y}^3 - z^3 \in \ZP[z]$. Note that by definition $f(z)$ reduces to $\phi_{a,b}(x_0,y_0,z)$ in $\FP$, so we have $z_0 \not \equiv 0$ mod $p$, $f(z_0) \equiv 0$ mod $p$ and $f'(z_0) = -3 z_0^2 \not \equiv 0$ mod $p$. Thus we may apply Hensel's lemma \cite[Thm 2.1]{conrad} to lift $z_0$ to a non-zero solution of $f(z) = \phi_{a,b}(\Tilde{x}, \Tilde{y}, z)$ in $\ZP$.
    
    If $p \nmid ab$ and $p = 2,5,7$, proving that $\phi_{a,b}$ has a solution in $\QP$ is similar. Instead of the Hasse-Weil bound, we use a small computer program (code in Appendix \ref{List of admissable pairs}) to check all possible values of $x,y,z \in \FP$ until we find a solution of $\phi_{a,b}$ in $\FP$. Indeed for all pairs $(a,b) \in \FP^2$ we find a solution of $\phi_{a,b}$ with at least one non-zero coordinate and it can then be lifted to a solution in $\ZP$ with Hensel's lemma similarly to the previous case.
    
    We now turn to statement $ii)$. For a solution in $\mathbb{Q}_3$ we cannot apply the same version of Hensel's lemma as in the previous cases, as the derivative of $\phi_{a,b}$ is zero modulo 3. We use a stronger version of Hensel's lemma for this case and will prove the following: $\phi_{a,b}$ has a solution in $\Z_3$ if and only if $\phi_{a,b}$ has a solution mod 27 with at least one coordinate not congruent to 0 modulo 3. We will call the pairs $(a,b)$ such that $\phi_{a,b}$ has a solution mod 27 ``admissible pairs".
    
    Suppose $\phi_{a,b}$ has a solution $(x_0,y_0,z_0)$ mod 27 with at least one coordinate that is not congruent to 0 mod 3. Assume that $z_0 \not \equiv 0$ mod 3, in the other cases the argument works analogously. Lift the triple $(x_0,y_0,z_0)$ to $(\Tilde{x}, \Tilde{y}, \Tilde{z}) \in \Z_3$ and define $f(z) \coloneqq \phi_{a,b}(\Tilde{x}, \Tilde{y}, z) = a\Tilde{x}^3 + b\Tilde{y}^3 - z^3 \in \Z_3[z]$. Then $z_0$ is a solution mod 27 of $f(z) \equiv ax_0^3 + by_0^3 - z^3 \bmod 27$ (and also mod 3). Using these properties of $z_0$, we obtain $|f(z_0)|_3 \leq \frac{1}{27} < \frac{1}{9} = |3z_0|_3^2 = |f'(z_0)|_3^2$. Thus we may apply a stronger version of Hensel's lemma \cite[Thm 4.1]{conrad} to lift the solution $z_0$ from $\mathbb{F}_3$ to a solution of $f(z)$ in $\Z_3$.
    
    Conversely, if $\phi_{a,b}$ has a solution in $\Z_3$ we may scale it by a power of 3 such that all coordinates have non-negative valuation but at least one them has valuation 0. The scaled triple remains a solution of $\phi_{a,b}$ because $\phi_{a,b}$ is homogeneous. The triple then reduces to a solution of $\phi_{a,b}$ mod 27 and the coordinate with valuation 0 is not congruent to 0 mod 3. 
\end{proof}
    In the appendix \ref{List of admissable pairs} we have compiled a ``list of admissible pairs". $\phi_{a,b}$ has a solution mod 27 with one coordinate not congruent to 0 mod 3 if and only if $(a,b)$ is congruent modulo 27 to one of the pairs given in that list. The list was compiled with a computer program that simply iterates through all possible values of $x,y,z$ mod 27 until it finds a solution to $\phi_{a,b}(x,y,z) = ax^3+by^3-z^3$ that meets the requirement.
\begin{lemma}
\label{p mid ab}
\begin{enumerate}[i)]
    \item Let $(a,b)$ be a pair of coprime, square-free integers with $3 \nmid ab$. 
    
    For all primes $p \equiv 2$ mod 3 with $p \mid ab$, $\phi_{a,b}$ has a solution in $\QP$.
    \item For all primes $p \equiv 1$ mod 3 we can construct a cubic character $\chi_p$ of modulus $p$ that satisfies the following:

    Let $(a,b)$ be a pair of coprime, square-free integers with $3 \nmid ab$. If $p \mid a$, we have
    \[
    \frac{1}{3}(1+\chi_p(b)+\chi_p^2(b)) = 
    \begin{cases}
        1 & \text{if } \phi_{a,b} \text{ has a non-zero solution in } \QP \\
        0 & \text{otherwise.}
    \end{cases}
    \]
    Analogously if $p \mid b$,
    \[
    \frac{1}{3}(1+\chi_p(a)+\chi_p^2(a)) = 
    \begin{cases}
        1 & \text{if } \phi_{a,b} \text{ has a non-zero solution in } \QP \\
        0 & \text{otherwise.}
    \end{cases}
    \]
\end{enumerate}
\end{lemma}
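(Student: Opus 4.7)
The plan is to reduce both parts to the single question of whether $b$ (resp.\ $a$) is a cube modulo $p$. By coprimality and squarefreeness of $(a,b)$, exactly one of $a,b$ is divisible by $p$; by symmetry assume $p \mid a$, so $p \nmid b$ and $v_p(a) = 1$. Reducing $\phi_{a,b}(x,y,z)=0$ modulo $p$ then collapses to $by^3 \equiv z^3 \pmod p$, which drives the entire argument.

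For part $(i)$, since $p \equiv 2 \pmod 3$ we have $\gcd(3,p-1)=1$, so cubing is a bijection on $\FP^\ast$; in particular $b$ is automatically a cube mod $p$, and we may pick $c \in \ZP$ with $c^3 \equiv b \pmod p$. The triple $(0,1,c)$ then satisfies $\phi_{a,b}(0,1,c) \equiv 0 \pmod p$, while $\partial_z \phi_{a,b}(0,1,c) = -3c^2$ is a $p$-adic unit (using $p \neq 3$ and $c \not\equiv 0 \pmod p$). Hensel's lemma, applied exactly as in the proof of Lemma \ref{p nmid ab}, lifts $c$ to $z \in \ZP$ with $\phi_{a,b}(0,1,z)=0$, giving the desired non-zero solution in $\QP$.

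For part $(ii)$, the hypothesis $p \equiv 1 \pmod 3$ gives $3 \mid p-1$, so $\FP^\ast/(\FP^\ast)^3$ is cyclic of order $3$; composing the quotient map with an isomorphism onto $\mu_3 \subset \mathbb{C}^\ast$ yields a cubic Dirichlet character $\chi_p$ mod $p$ whose kernel is precisely the cubes. Standard orthogonality then gives $\tfrac{1}{3}(1+\chi_p(b)+\chi_p^2(b)) = 1$ if $b \in (\FP^\ast)^3$ and $0$ otherwise, so the problem reduces to showing that, when $p \mid a$, the form $\phi_{a,b}$ has a non-zero solution in $\QP$ if and only if $b$ is a cube modulo $p$; the case $p \mid b$ is entirely symmetric.

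The ``if'' direction is the Hensel argument from part $(i)$, which never actually used $p \equiv 2 \pmod 3$ beyond producing the cube root $c$. For the converse, take a primitive solution $(x_0,y_0,z_0) \in \ZP^3$ (not all three divisible by $p$) and reduce mod $p$ to obtain $by_0^3 \equiv z_0^3 \pmod p$. If $p \nmid y_0$, then $b \equiv (z_0 y_0^{-1})^3 \pmod p$ as required. Otherwise $p \mid y_0$ forces $p \mid z_0$, and from $a x_0^3 = z_0^3 - b y_0^3$ we read off $v_p(a x_0^3) \geq 3$; combined with $v_p(a) = 1$ this yields $v_p(x_0) \geq 1$, contradicting primitivity. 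The only delicate step is this final descent: the valuation bookkeeping would collapse without squarefreeness of $a$, which is exactly what pins $v_p(a)$ to $1$ rather than allowing a larger value that would swallow the gain of $3$ from $z_0^3 - by_0^3$.
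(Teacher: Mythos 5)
Your proof is correct and follows essentially the same route as the paper: reduce modulo $p$ to $by^3 \equiv z^3 \pmod p$, lift with Hensel for the forward direction, and use the valuation descent (with $v_p(a)=1$ from squarefreeness) to rule out solutions with $p \mid y_0$. Your explicit tracking of $v_p(a)=1$ makes the descent slightly more transparent than the paper's terse ``comparing valuations'' remark, but the underlying argument is identical.
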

\begin{proof}[Proof of Lemma \ref{p mid ab}] Let $(a,b)$ be a pair of coprime, square-free integers with $3 \nmid ab$. Assume for this proof that $p \mid a$ and thus by co-primality that $p \nmid b$. The analogue argument works for $p \mid b$, $p \nmid a$. We will first show that $\phi_{a,b}$ has a solution $(x_1,y_1,z_1)$ in $\QP$ if and only if $\phi_{a,b} \equiv by^3-z^3$ mod $p$ has a solution $(x_0,y_0,z_0)$ in $\FP$ that is not one of the trivial solutions $(x_0, 0, 0)$. Afterwards we will give criteria for the existence of a non-trivial solution mod $p$ and construct the cubic character $\chi_p$.

    If $\phi_{a,b}$ has a solution $(x_1,y_1,z_1)$ in $\QP$, we can assume that - after scaling by a power of $p$ - the valuations of $x_1,y_1,z_1$ are non-negative but at least one valuation is zero. Furthermore comparing the valuations of the terms on both sides of the equation $ax_1^3+by_1^3-z_1^3 = 0$ shows that at least one of $y_1$ and $z_1$ need to have valuation zero. Thus $(x_1,y_1,z_1)$ reduces to a non-trivial solution of $\phi_{a,b} \equiv by^3-z^3$ mod $p$.

    Conversely if $\phi_{a,b} \equiv by^3 - z^3$ mod $p$ has a solution $(x_0,y_0,z_0)$ in $\FP$ with $y_0 \neq 0$ or $z_0 \neq 0$, it can be lifted to $\ZP$ with Hensel's Lemma \cite[Thm 2.1]{conrad}, as in the proof of $i)$ in the previous lemma.
    
    In $\FP$, $\phi_{a,b}(x,y,z) \equiv by^3 - z^3$ has a non-trivial solution (a solution that is not of the form $(x_0,0,0)$) $\iff$ $z^3 = by^3$ has a solution in $\FP$ with $z\neq0$ $\iff$ $b$ is a cube in $\FP$. For primes $p \equiv 2$ mod 3 this is always the case because for these primes cubing is an isomorphism of $\FP$. This proves $i)$. For primes $p \equiv 1$ mod 3 we will construct a cubic character $\chi_p$ which satisfies 
    \begin{equation}
    \begin{cases}
    \label{haha}
        \chi_p(b) = 1, & \text{if $b$ is a cube mod $p$} \\
        \chi_p(b) \in \{ \zeta_3, \zeta_3^2\}, & \text{otherwise}.
    \end{cases}
    \end{equation}
    Then $\frac{1}{3}(1+\chi_p(b)+\chi_p^2(b))$ is the indicator function of ``$b$ is a cube modulo $p$", i.e. of ``$\phi_{a,b}$ has a solution in $\QP$".
    
    For primes $p \equiv 1$ mod 3 there are exactly two characters that satisfy the conditions in \eqref{haha} and they can be constructed as follows. The multiplicative group $G \coloneqq \FP^\ast$ is a cyclic group of order $3k$ and thus has a subgroup $H$ of index 3 (if $g$ generates $G$, then $g^3$ is a generator of $H$). The coset $1 \cdot H$ in $\sfrac{G}{H}$ contains all cubes modulo $p$ and thus we may define a character that maps elements of $H$ to $1 \in \mathbb{C}$ and the elements of the other two cosets to $\zeta_3$ and $\zeta_3^2$ respectively. The two characters obtained this way are of order 3, conductor $p$ and complex conjugates of each other.
\end{proof}
\subsection{Lemmas on sums of characters}
As just shown in the proof above, there are precisely two characters for each prime $p \equiv 1$ mod 3 that satisfy property $ii)$ of Lemma \ref{p mid ab}. We now make a choice among these two for the rest of this paper.

Any prime $p \equiv 1 \bmod 3$ splits completely in $\Q[\zeta_3] = \Q[\frac{-1+\sqrt{-3}}{2}] = \Q[\sqrt{-3}]$, i.e. $p = \pi_p \overline{\pi_p}$ with $\pi_p \in \Z[\zeta_3]$ prime and $\text{im}(\pi_p) > 0$. Note that the 3rd power residue symbols 
\[\left(\frac{\cdot}{\pi_p}\right)_{\Z[\zeta_3],3} \text{ and } \left(\frac{\cdot}{\overline{\pi_p}}\right)_{\Z[\zeta_3],3}\]
are cubic characters when interpreted as a function in the top argument and restricted to the rational integers. Furthermore these cubic characters have modulus $p$, map integers that are cubes modulo $p$ to 1 and are distinct. We conclude that on the rational integers, these two power residue symbols correspond to the two possible choices for $\chi_p$ above.
\begin{mydef}
    For rational primes $p \equiv 1 \bmod 3$ define $\chi_p$ to be the cubic character 
    $$\chi_p(\cdot) \coloneqq \left(\frac{\cdot}{\pi_p}\right)_{\Z[\zeta_3],3}: \Z \mapsto \mathbb{C} \; .$$
    Here $\left(\frac{\,\cdot\,}{\,\cdot\,}\right)_{\Z[\zeta_3],3}$ is the 3rd power residue symbol and $\pi_p \in \Z[\zeta_3]$ is the prime over $p = \pi_p \overline{\pi_p}$ with $\text{im}(\pi_p) > 0$. Note that $\chi_p$ satisfies $ii)$ of Lemma \ref{p mid ab}.

    Multiplicatively extend the above definition to square-free integers whose prime factors are all equivalent to 1 mod 3, i.e. for square-free $d \in \Z$ with $p \,|\, d \implies p \equiv 1$ mod 3, define
    $$\pi_d \coloneqq \prod_{p|d} \pi_p \quad \text{ and } \quad \chi_d \coloneqq \prod_{p|d} \chi_p$$
    in the same way that the power residue symbol can be extended
    $$\chi_d(\cdot) = \left(\frac{\cdot}{\pi_d}\right)_{\Z[\zeta_3],3} \coloneqq \prod_{p|d} \left(\frac{\cdot}{\pi_p}\right)_{\Z[\zeta_3],3}. $$
    Lastly we ease notation a bit and simply write $\left(\frac{\,\cdot\,}{\,\cdot\,}\right)_{3}$ for the cubic power residue symbol.
\end{mydef}
For the proof of the main theorem we need character oscillation results. The first such result is a ``double oscillation result", derived from Proposition \ref{AbstractLargeSieve}.
\begin{lemma}
\label{DoubleOscillation}
    Let $s,t \in \{0,1,2\}$ but $(s,t) \neq (0,0)$. Then there exist constants $0 < \kappa_1 < 1$ and $\kappa_2 > 0$ such that
    \[
    \bigg | \sum_{
    \substack{
    1 < d \leq A \\
    p|d \implies \\
    p \equiv 1 \bmod 3}}
    \sum_{
    \substack{
    1 < e \leq B \\
    p|e \implies \\
    p \equiv 1 \bmod 3}}
    \alpha_d \beta_e \chi_d(e)^s \chi_e(d)^t
    \bigg |
    \ll
    (A^{-\kappa_1} + B^{-\kappa_1})AB(\log AB)^{\kappa_2 }
    \]
    where $\alpha_d$, $\beta_e$ are non-zero complex numbers with absolute value bounded by one, that may depend on $d$ and $e$ respectively.
\end{lemma}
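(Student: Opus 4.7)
The plan is to reduce the sum to an instance of Proposition \ref{AbstractLargeSieve} applied with $K = \Q[\zeta_3]$, $l = 3$, and a fixed small modulus $M$ (say $M = 9$). Here $n = [K:\Q] = 2$, $V$ is trivial and $T = \OO_K^\times$ is the group of sixth roots of unity, so the fundamental domain $\DD$ may be taken to be $\OO_K$ itself. First I would embed the double sum into $\Z[\zeta_3]$ via the injection $d \mapsto \pi_d = \prod_{p\mid d}\pi_p$, transferring the coefficients $\alpha_d, \beta_e$ to coefficients $\alpha_w, \beta_z$ supported on the image. Because $N(\pi_d) = d$, the bound $d \leq A$ becomes $N(w) \leq A$, and partitioning by the residues of $\pi_d, \pi_e$ modulo $M$ and by $T$-orbit yields finitely many sub-sums of the form $B(A, B, \delta, \epsilon, t_1, t_2)$.

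The algebraic heart is the construction of $\gamma$. Expanding $\chi_d(e) = (\pi_e/\pi_d)_3 \cdot (\overline{\pi_e}/\pi_d)_3$ and using cubic reciprocity $(\pi/\pi')_3 = (\pi'/\pi)_3$ for primary primes, together with the conjugation identity $(\overline{a}/\overline{b})_3 = \overline{(a/b)_3} = (a/b)_3^{-1}$, gives for primary $\pi_d, \pi_e$:
\[
\chi_d(e)^s \chi_e(d)^t
=
\left(\frac{\pi_e}{\pi_d}\right)_3^{s+t}
\left(\frac{\overline{\pi_e}}{\pi_d}\right)_3^{s-t}.
\]
The unit discrepancy between the ``$\Imag(\pi_p) > 0$'' convention and the primary convention depends only on $\pi_p$ modulo $9$, hence is constant on residue classes modulo $M$ and may be absorbed into $\alpha_w, \beta_z$. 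I would therefore define
\[
\gamma(w, z)
=
\left(\frac{z}{w}\right)_3^{s+t}
\left(\frac{\overline{z}}{w}\right)_3^{s-t}
\]
when $\gcd(wz, M) = 1$ and $\gcd(w, z) = 1$, and $\gamma(w, z) = 0$ otherwise.

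Next I would verify (P1)--(P3). Multiplicativity (P1) is immediate from multiplicativity of the cubic residue symbol. For the first half of (P2), $(z/w)_3$ depends only on $z \bmod w$, hence on $z \bmod N(w)$; and $(\overline{z}/w)_3$ depends on $\overline{z} \bmod w$, which in turn is determined by $z \bmod N(w)$ because the ideal $N(w)\OO_K$ is conjugation-invariant (as $N(w) \in \Z$). For the vanishing of $\sum_\xi \gamma(w,\xi)$ I would note that $\xi \mapsto \gamma(w, \xi)$ is a multiplicative character on $(\OO_K/MN(w)\OO_K)^\times$ and is non-trivial for generic $w$, because $(s, t) \neq (0, 0)$ forces at least one of $s+t$ or $s-t$ to be non-zero modulo $3$. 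I would then take $A_{\text{bad}}$ to be the set of norms $N(w)$ for which the character degenerates and establish (P3) by a counting argument. Applying Proposition \ref{AbstractLargeSieve} to each sub-sum and summing gives the claimed bound with, for instance, $\kappa_1 = \min(C_2/6, 1/12)$ and $\kappa_2 = C_K$, after possibly swapping the roles of $(d,s)$ and $(e,t)$ so that the hypothesis $X \leq Y$ of the proposition is satisfied.

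The main obstacle will be making (P2) and (P3) precise. One must characterize exactly which $w \in \Z[\zeta_3]$ render $\gamma(w, \cdot)$ trivial on $(\OO_K/MN(w)\OO_K)^\times$---this reduces to a linear-algebra condition modulo $3$ on the exponent vector in the prime factorization of $w$---and then bound the number of $w$ of norm at most $X$ satisfying that condition by $\ll X^{1-C_2}$ for some $C_2 > 0$. The bookkeeping of unit factors when passing between the $\Imag > 0$ and primary conventions is further tedious but routine.
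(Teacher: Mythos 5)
Your plan mirrors the paper's at the top level: both reduce to Proposition~\ref{AbstractLargeSieve} over $K = \Q[\zeta_3]$ with $n = 2$, $V$ trivial, $\DD = \OO_K$, and a partition into sub-sums $B(A,B,\delta,\epsilon,t_1,t_2)$. Where you differ is the form of $\gamma$. You pre-apply cubic reciprocity and take $\gamma(w,z) = (z/w)_3^{s+t}(\bar z/w)_3^{s-t}$, which has the merit that the first half of (P2) and the non-triviality claim are transparent (both symbols have $w$ downstairs, and $(s,t)\mapsto(s+t,s-t)$ is invertible modulo $3$). The paper instead takes $\gamma(w,z) = (N(z)/w)_3^s(N(w)/z)_3^t$; since $N(\pi_e) = e$, $N(\pi_d) = d$, and the residue symbol depends only on the \emph{ideal} of its lower argument, this $\gamma$ equals $\chi_d(e)^s\chi_e(d)^t$ at $(w,z)=(\pi_d,\pi_e)$ \emph{exactly}, with no primary/unit corrections at all. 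Cubic reciprocity then enters only inside the verification of (P2) (with the safety margin of $M=27$ rather than your $M=9$), where it produces essentially your reduced expression. Both routes should work, but the paper's choice of $\gamma$ sidesteps precisely the bookkeeping you defer.

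Two points in your write-up would need fixing. First, the unit discrepancy does \emph{not} factor globally as $f(w)g(z)$: a reciprocity supplement generally depends jointly on $w\bmod M$ and $z\bmod M$. It only becomes a harmless constant once $\delta$ and $\epsilon$ are fixed inside a sub-sum $B(A,B,\delta,\epsilon,t_1,t_2)$ --- which your partition does provide --- so the argument is salvageable, but ``absorbed into $\alpha_w,\beta_z$'' as stated is imprecise. Second and more substantially, $A_{\text{bad}}$ and (P3) are left as an IOU. The paper resolves this concretely: it declares $A_{\text{bad}}$ to be the squarefull integers (so (P3) holds with $C_2 = \tfrac12$ by the standard $O(X^{1/2})$ count), and then proves non-triviality of $\xi\mapsto\gamma(w,\xi)$ whenever $N(w)$ has a prime factor $p$ of multiplicity one, by picking the prime $\pi\mid w$ above $p$ (so $\bar\pi\nmid w$) and using the Chinese remainder theorem to build a test element $z\equiv 1\bmod 27N(w)/p$ that hits a primitive cube root of unity under $(\cdot/\pi)_3$. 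You should adopt a concrete choice like this; defining $A_{\text{bad}}$ implicitly as ``the set where the character degenerates'' does not by itself produce the polynomial saving that (P3) demands.
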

\begin{remark}
    Intuitively the values of non trivial characters seem to oscillate randomly as one varies the argument, thus summing a wide range of characters over a wide range of arguments (that are not chosen in an overly picky manner) should lead to some cancellation. The above lemma is a result quantifying this intuition for the special case that we need.
\end{remark}
\begin{proof}[Proof of Lemma \ref{DoubleOscillation}]
    As remarked before this result derives from Proposition \ref{AbstractLargeSieve}. We replace the Dirichlet characters by the respective power residue symbols.
    \[
    \chi_d(e)^s \chi_e(d)^t
    =
    \left(\frac{e}{\pi_d}\right)_{3}^s
    \left(\frac{d}{\pi_e}\right)_{3}^t
    =
    \left(\frac{N(\pi_e)}{\pi_d}\right)_{3}^s
    \left(\frac{N(\pi_d)}{\pi_e}\right)_{3}^t
    \]
    Here $N(\cdot)$ denotes the norm in $\Q[\zeta_3]$. We now choose the correct setup to apply Proposition \ref{AbstractLargeSieve}. Let $K = \Q[\zeta_3]$ and $\mathcal{O}_K = \Z[\zeta_3]$ the ring of integers. Recall that $\Z[\zeta_3]$ is a PID, in which rational primes equivalent to 1 modulo 3 split, rational primes equivalent to 2 modulo 3 are inert and $3 = (1-\zeta_3)(1-\zeta_3^2)$. As a consequence $N(z) \equiv 1$ mod 3 for all primes except $1-\zeta_3$, i.e. the 3rd power residue symbol is defined on all elements coprime to 3. Furthermore the ring of integers is a lattice with 6 units, $\{\pm1, \pm \zeta_3, \pm \zeta_3^2\}$. Let $l = 3$, $M = 27$, let $A_{\text{bad}}$ be the set of squarefull integers and let $S_{27\mathcal{O}_K}$ denote the set of elements in $\mathcal{O}_K$ that are coprime to $M\mathcal{O}_K = 27\mathcal{O}_K$. In other words $S_{27\mathcal{O}_K}$ is the set of integers coprime to 3. Define the following function
    \begin{gather*} \allowdisplaybreaks
        \gamma : S_{27\mathcal{O}_K} \times S_{27\mathcal{O}_K} \longrightarrow \{0, \zeta_3, \zeta_3^2, 1\} \\
        \gamma(w,z) = 
        \left(\frac{N(z)}{w}\right)_{3}^s
        \left(\frac{N(w)}{z}\right)_{3}^t.
    \end{gather*}
    We check that $\gamma$, together with the chosen parameters, satisfies the requirements in \ref{AbstractLargeSieve}. 
    
    P1) Multiplicativity: $\gamma(w_1w_2,z) = \gamma(w_1,z)\gamma(w_2,z)$ and $\gamma(w,z_1z_2)=\gamma(w,z_1)\gamma(w,z_2)$ follow from the multiplicativity of the norm and the multiplicativity of the power residue symbol in both its arguments.
    
    P2) Periodicity: For $z_1 \equiv z_2$ mod $N(w)$ and $z_1 \equiv z_2$ mod $27$ we also have $z_1 \equiv z_2$ mod $w$, so 
    \begin{align*} \allowdisplaybreaks
        \left(\frac{N(z_1)}{w}\right)_{3}
        \left(\frac{N(w)}{z_1}\right)_{3}
        &=
        \left(\frac{N(z_1)}{w}\right)_{3}
        \left(\frac{uz_1}{N(w)}\right)_{3}
        \allowdisplaybreaks\\
        &=
        \left(\frac{N(z_2)}{w}\right)_{3}
        \left(\frac{uz_2}{N(w)}\right)_{3}
        =
        \left(\frac{N(z_2)}{w}\right)_{3}
        \left(\frac{N(w)}{z_2}\right)_{3}
    \end{align*}
    and thus $\gamma(w,z_1) = \gamma(w,z_2)$. We again used multiplicativity of the norm and power residue symbol but now additionally cubic reciprocity. To see why cubic reciprocity may be applied, note that elements coprime to three may be multiplied by units to be primary. Thus in the above chain of equalities $u$ is some unit such that $uz_1$ is primary. Additionally, the power residue symbol $\left(\frac{N(w)}{z_1}\right)_{3}$ depends not on $z_1$, but the ideal generated by $z_1$, and the ideal does not change if the generator is multiplied by a unit.

    Furthermore, if $N(w) \notin A_{\text{bad}}$, then we have 
    \[
    \sum_{\substack{
    \xi \bmod M N(w)\\
    \gcd(\xi,M) = (1)
    }}
    \gamma(w,\xi)
    =
    \sum_{\substack{
    \xi \bmod 27 N(w)\\
    \gcd(\xi,27) = (1)
    }}
    \left(\frac{N(\xi)}{w}\right)_{3}^s
    \left(\frac{N(w)}{\xi}\right)_{3}^t
    =
    0
    \]
    The latter equality follows immediately from the orthogonality of characters if we can show that 
    \begin{align}
        \label{eChar}
    \xi 
    \mapsto 
    \left(\frac{N(\xi)}{w}\right)_{3}^s 
    \left(\frac{N(w)}{\xi}\right)_{3}^t
    =
    \left(\frac{\xi\bar{\xi}}{w}\right)_{3}^s 
    \left(\frac{w\bar{w}}{\xi}\right)_{3}^t
    \end{align} 
    is a nontrivial character on the finite abelian group $\mathcal{O}_K/27N(w)\mathcal{O}_K$. Clearly this is a character (see property P1), what remains is non-triviality. We construct an element $z \in \mathcal{O}_K$ such that the character does not evaluate to 1 at $z$.
    
    A calculation using cubic reciprocity for units and the prime $1-\zeta_3$ shows that if $z \equiv 1 \bmod 27$ and $w \in S_{27\mathcal{O}_K}$ we may use $\left(\frac{z}{w}\right)_{3} = \left(\frac{w}{z}\right)_{3}$ independently of the congruence class of $w \bmod 27$. Thus we restrict to $z \equiv 1 \bmod 27$, so we may freely apply cubic reciprocity. We have another useful relation for the third power residue symbol:
    $$
    \left(\left(\frac{x}{y}\right)_3\right)^2 = \overline{\left(\frac{x}{y}\right)}_3 = \left(\frac{\bar{x}}{\bar{y}}\right)_3.
    $$
    Using both relations, we rewrite equation \eqref{eChar} for $z \equiv 1 \bmod 27$ as
    $$
    z \mapsto \left(\frac{z}{w}\right)_3^{s + t} \left(\frac{\bar{z}}{w}\right)_3^{2s + t}.
    $$
    Since the linear map $\varphi: \mathbb{F}_3^2 \rightarrow \mathbb{F}_3^2$, given by $(s, t) \mapsto (s + t, 2s + t)$ is invertible, it follows that $(s + t, 2s + t)$ is also not congruent to $(0, 0)$ modulo $3$. Hence it suffices to show for every two integers $s', t'$, not both congruent to $0$ modulo $3$, that
    $$
    z \mapsto \left(\frac{z}{w}\right)_3^{s'} \left(\frac{\bar{z}}{w}\right)_3^{t'}.
    $$
    is not principal on the subgroup of integers $z \equiv 1 \bmod 27$.
    
    Recall that we are assuming $N(w) \notin A_{\text{bad}} = \{\text{squarefull integers}\}$, so let $p$ be a prime dividing $N(w)$ with multiplicity one. Then there exists a prime $\pi$ of $\Z[\zeta_3]$ such that $\pi$ divides $w$ but $\bar{\pi}$ does not. Write $g_1$ for an element of $\Z[\zeta_3]/\pi \Z[\zeta_3]$ with $\left(\frac{g_1}{\pi}\right)_3 = \zeta_3$ and write $g_2$ for an element of $\Z[\zeta_3]/\bar{\pi}\Z[\zeta_3]$ with $\left(\frac{g_2}{\bar{\pi}}\right)_3 = \zeta_3$. By the Chinese remainder theorem, we may find for every two integers $c, d$ some integer $z$, depending on $c$ and $d$, that satisfies
    \begin{gather*} \allowdisplaybreaks
    z \equiv 1 \bmod 27, \quad z \equiv 1 \bmod \frac{N(w)}{p} \\
    z \equiv g_1^c \bmod \pi, \quad z \equiv g_2^d \bmod \bar{\pi}.
    \end{gather*}
    Then the character, evaluated at this choice of $z$, becomes
    $$
    \left(\frac{z}{w}\right)_3^{s'} \left(\frac{\bar{z}}{w}\right)_3^{t'}
    =
    \left(\frac{z}{\pi}\right)_3^{s'} \left(\frac{\bar{z}}{\pi}\right)_3^{t'}
    \left(\frac{z}{w'}\right)_3^{s'} \left(\frac{\bar{z}}{w'}\right)_3^{t'}
    =
    \zeta_3^{s' c} \cdot \zeta_3^{2t' d} = \zeta_3^{s' c + 2t' d},
    $$
    where $w\coloneqq\pi w'$ and $\left(\frac{z}{w'}\right)_3^{s'} \left(\frac{\bar{z}}{w'}\right)_3^{t'} = 1$ because of the condition $z \equiv 1 \bmod \frac{N(w)}{p}$. For a good choice of $c, d$ the last expression is not $1$ and the proof is finished.
    
    P3) Bad count: $A_{\text{bad}}=\{$squarefull integers$\}$  satisfies the required bound with $C_2 = \frac{1}{2}$, if $C_1$ is sufficiently large (one could choose $C_1 = \zeta(\frac{3}{2})/\zeta(3)$, see \cite{Golomb})
    $$\sum_{\substack{
    n \in A_{\text{bad}}\\
    n \leq X}} 
    1 
    \leq C_1 X^{1 - C_2}
    \ll X^{\frac{1}{2}}.$$
    We have checked that $\gamma$ satisfies all requirements. We continue following the setup of Proposition \ref{AbstractLargeSieve}.
    
    All units $\mathcal{O}_K^{\ast} = \Z[\zeta_3]^{\ast} = \{\pm1, \pm \zeta_3, \pm \zeta_3^2\}$ in $K=\Q[\zeta_3]$ are torsion, hence in a decomposition $\Z[\zeta_3]^{\ast} = T \oplus V$ into torsion $T$ and free part $V$ (as in Dirichlet's unit theorem), $V$ is the trivial group. Consequently there is only one choice for a fundamental domain $\mathcal{D} \subseteq \mathcal{O}_K$ for the action of $V$ on $\mathcal{O}_K$, that is $\mathcal{D}= \mathcal{O}_K$.

    We are ready to rewrite the sum and apply Proposition \ref{AbstractLargeSieve}
    \begin{align*} \allowdisplaybreaks
    &\bigg | \sum_{
    \substack{
    1 < d \leq A \\
    p|d \implies \\
    p \equiv 1 \bmod 3}}
    \sum_{
    \substack{
    1 < e \leq B \\
    p|e \implies \\
    p \equiv 1 \bmod 3}}
    \alpha_d \beta_e \chi_d(e)^s \chi_e(d)^t
    \bigg |
    =
    \bigg | \sum_{
    \substack{
    w \in \mathcal{O}_K\\
    1 < N(w) \leq A}}
    \sum_{
    \substack{
    z \in \mathcal{O}_K\\
    1 < N(z) \leq B}}
    \alpha_{w}' \beta_{z}' \left(\frac{N(z)}{w}\right)_{3}^s \left(\frac{N(w)}{z}\right)_{3}^t
    \bigg |
    \allowdisplaybreaks\\
    =
    &\bigg | \sum_{
    \substack{
    w \in \mathcal{O}_K\\
    1 < N(w) \leq A}}
    \sum_{
    \substack{
    z \in \mathcal{O}_K\\
    1 < N(z) \leq B}}
    \alpha_{w}' \beta_{z}' \gamma(w,z)
    \bigg |
    \leq
    \sums{
    \delta,\epsilon \bmod 27\\
    \text{invertible} %%if w not invertible mod 27 then 3|N(w) so it does not appear in the previous sum
    }
    \bigg | \sum_{
    \substack{
    w \in \mathcal{O}_K\\
    1 < N(w) \leq A\\
    w \equiv \delta \bmod 27}}
    \sum_{
    \substack{
    z \in \mathcal{O}_K\\
    1 < N(z) \leq B\\
    z \equiv \epsilon \bmod 27}}
    \alpha_{w}' \beta_{z}' \gamma(w,z)
    \bigg |
    \allowdisplaybreaks\\
    =
    &
    \sums{
    \delta,\epsilon \bmod 27\\
    \text{invertible} %%if w not invertible mod 27 then 3|N(w) so it does not appear in the previous sum
    }
    |\mathcal{B}(A,B, \delta, \epsilon, 1, 1)|
    \ll
    (A^{-\frac{1}{12}}+B^{-\frac{1}{12}})AB\log(AB)^{\kappa_2 }.
    \end{align*}
    Here $\kappa_2 > 0$ is a constant and $\alpha_w', \beta_z'$ are complex numbers bounded by one, accounting for $\alpha_d, \beta_e$ from before and ensuring that the only summands appearing are the same ones as in the original sum. More precisely, $\alpha_w'$ and $\beta_z'$ also include the indicator functions of ``the rational prime factors of $N(w)$ and $N(z)$ are all equivalent to 1 mod 3" and ``the imaginary parts of all prime factors of $w$ and $z$ are greater or equal to 0".
\end{proof}
The second oscillation result is a ``single oscillation result". It derives from a theorem on sums of multiplicative functions from Koukoulopoulos' book \cite[Thm. 13.2, p.134]{KOUKOU}. It is included in the tools section of this paper, see Theorem \ref{Theorem 13.2 KOUKOU}.
\begin{lemma}
Let $i \in \{1,2\}$, let $m > 2$, $B > 0$ and $M \geq 1$. Let $0 < d_2, d_3 \leq (\log B)^M$ be coprime, square-free integers, that are only divisible by primes equivalent to 1 mod 3 and not both equal to 1. Then for all $C > 0$
    \label{SingleOscillation}
    $$
    \bigg|
    \sum_{
    \substack{
    B^{\sfrac{1}{m}} < e \leq B \\
    p \mid e \implies p \equiv i \bmod 3
    }}
    \mu^2(e)\frac{\chi_{d_2}(e)\chi_{d_3}^2(e)}{3^{\omega(e)}}
    \bigg|
    \ll_{C,m,M} 
    \frac{B}{(\log B)^{C}}.
    $$
    Furthermore we obtain the same bound if we omit the factor $\frac{1}{3^{\omega(e)}}$, 
    $$
    \bigg|
    \sum_{
    \substack{
    B^{\sfrac{1}{m}} < e \leq B \\
    p \mid e \implies p \equiv i \bmod 3
    }}
    \mu^2(e)\chi_{d_2}(e)\chi_{d_3}^2(e)
    \bigg|
    \ll_{C,m,M} 
    \frac{B}{(\log B)^{C}}.
    $$
\end{lemma}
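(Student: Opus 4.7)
The plan is to apply Theorem~\ref{Theorem 13.2 KOUKOU} to the multiplicative function
\[
f(e) := \mu^2(e)\,\chi_{d_2}(e)\chi_{d_3}^2(e)\,\mathbf{1}\{p\mid e \Rightarrow p\equiv i \bmod 3\}\,/\,3^{\omega(e)}
\]
(and to its counterpart with $1/3^{\omega(e)}$ removed). Both functions are multiplicative with $|f(n)|\le 1=\tau_1(n)$, so the theorem applies with $k=1$. I would take $Q:=3(\log B)^{2M}$, so that $\log Q \ll_M \log\log B$.

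The crucial step is verifying hypothesis \eqref{KOKOU condition} with $\alpha=0$. For primes $p\neq 3$ I would write $\mathbf{1}\{p\equiv i \bmod 3\}=\tfrac12(1+(-1)^{i-1}\eta(p))$, where $\eta$ is the unique nontrivial Dirichlet character modulo $3$. This expresses $f(p)$ (up to absolute constants depending on whether the $1/3^{\omega(e)}$ factor is present) as a linear combination of the two Dirichlet characters
\[
\psi_0 := \chi_{d_2}\chi_{d_3}^2,\qquad \psi_1 := \chi_{d_2}\chi_{d_3}^2\cdot\eta,
\]
of conductors dividing $d_2d_3$ and $3d_2d_3$ respectively, hence at most $Q$. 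Because $d_2,d_3$ are coprime and not both $1$, the character $\psi_0$ factors as $\prod_{p\mid d_2}\chi_p \cdot \prod_{p\mid d_3}\chi_p^2$ over disjoint sets of primes, and restricts to a nontrivial cubic character at every prime dividing $d_2d_3$, hence is nontrivial; then $\psi_1 = \psi_0 \eta$ has order $\mathrm{lcm}(3,2)=6$ and is also nontrivial. The Siegel--Walfisz theorem applied to $\psi_0$ and $\psi_1$ gives $\sum_{p\le x}\psi_j(p)\log p = O_A(x/(\log x)^A)$ for every $A>0$ and $x\ge Q$, which is exactly \eqref{KOKOU condition} with $\alpha=0$.

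I would then invoke Theorem~\ref{Theorem 13.2 KOUKOU} for any integer $J\ge 1$. Because $\alpha=0$, each summand of the main term carries a factor $1/\Gamma(-j)$ with $j\ge 0$, so by Remark~\ref{convention} the entire main term vanishes, leaving
\[
\sum_{n\le x}f(n) \ll_{J,m,M} \frac{x(\log Q)^{J+1}}{(\log x)^{J+1}} \ll_{J,m,M} \frac{x(\log\log B)^{J+1}}{(\log x)^{J+1}},
\]
valid for $x\ge \exp((\log Q)^{1+\epsilon})$, a condition comfortably satisfied by every $x\in(B^{1/m},B]$ once $B$ is large (and the lemma is trivial otherwise). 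Writing the desired sum as the difference of such sums over $e\le B$ and $e\le B^{1/m}$, and choosing $J\ge C$, the ratio $(\log\log B)^{J+1}/(\log B)^{J+1-C}$ tends to zero, yielding the bound $O_{C,m,M}(B/(\log B)^C)$. The second displayed inequality of the lemma follows identically: deleting $1/3^{\omega(e)}$ only alters the absolute constants in the decomposition of $f(p)$, not the crucial vanishing of $\alpha$.

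I expect the main obstacle to be the character-theoretic verification that $\psi_0$ and $\psi_1$ are nontrivial; this is precisely what forces $\alpha=0$, which in turn collapses the entire main term in Theorem~\ref{Theorem 13.2 KOUKOU}. Everything else is a direct and routine invocation of that theorem.
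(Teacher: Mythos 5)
Your overall strategy is the same as the paper's: reduce to Theorem~\ref{Theorem 13.2 KOUKOU} with $\alpha=0$, use Siegel--Walfisz to verify condition~\eqref{KOKOU condition} via nontriviality of the character $\chi_{d_2}\chi_{d_3}^2$, and then observe that the main term collapses because $1/\Gamma(-j)=0$. Your decomposition of the indicator $\mathbf 1\{p\equiv i\bmod 3\}$ via the quadratic character $\eta$ modulo $3$, and your argument that $\psi_0=\chi_{d_2}\chi_{d_3}^2$ and $\psi_1=\psi_0\eta$ are nontrivial, are correct and a little slicker than the paper's version, which instead splits the prime sum over the invertible residue classes modulo $3d_2d_3$ and applies orthogonality there; both routes yield the same conclusion.

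However, there is a genuine gap in your choice of $Q$. You set $Q:=3(\log B)^{2M}$ precisely so that $\log Q\ll_M\log\log B$, but condition~\eqref{KOKOU condition} must hold for \emph{all} $x\ge Q$, and your verification invokes Siegel--Walfisz for the characters $\psi_0,\psi_1$, whose conductors can be as large as $3d_2d_3\asymp(\log B)^{2M}$. Siegel--Walfisz with error $O_N\bigl(x\exp(-c_N\sqrt{\log x})\bigr)$ requires the conductor to be at most $(\log x)^N$ for some fixed $N$. Taking $x$ near $Q=3(\log B)^{2M}$ you would need
\[
(\log B)^{2M}\;\ll\;(\log Q)^N\;\asymp\;(\log\log B)^N,
\]
which fails for large $B$ regardless of how $N$ is chosen. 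In fact near $x=Q$ the modulus is comparable to $x$ itself and no nontrivial prime-counting asymptotic is available. The paper avoids this by taking $Q=\exp\bigl((3\log B)^{1/100}\bigr)$, so that $(\log Q)^{200M}=(3\log B)^{2M}\ge 3d_2d_3$ and Siegel--Walfisz applies uniformly for $x\ge Q$; one then pays a factor $(\log Q)^{J+1}=(3\log B)^{(J+1)/100}$ in the error term from Theorem~\ref{Theorem 13.2 KOUKOU}, which is still overcome by the $(\log x)^{J+1}\gg(\log B)^{J+1}$ in the denominator for $J$ large. Your argument would go through after replacing your $Q$ with a choice of this larger shape; as written, the Siegel--Walfisz step is not justified.
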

\begin{proof}[Proof of Lemma \ref{SingleOscillation}]
    We show how to bound the sum
    \begin{gather*}
    \sum_{
    \substack{
    B^{\sfrac{1}{m}} < e \leq B \\
    p \mid e \implies p \equiv 1 \bmod 3
    }}
    \mu^2(e)\frac{\chi_{d_2}(e)\chi_{d_3}^2(e)}{3^{\omega(e)}},
    \end{gather*}
    but throughout the proof it will be clear that omitting the factor $\frac{1}{3^{\omega(e)}}$ or changing the summation condition ``$p \mid e \implies p \equiv 1 \bmod 3$" to ``$p \mid e \implies p \equiv 2 \bmod 3$" does not change the bound.
    
    As remarked before, the lemma derives from a result on sums of multiplicative functions, Theorem \ref{Theorem 13.2 KOUKOU}. We now choose the correct parameters to apply the theorem to our character sum. Let $\alpha = 0, k=1, Q = \exp((3\log B)^{\frac{1}{100}})$, $J$ some large integer to be specified later and $\epsilon > 0$ some real number to be specified later. Define the function
    $$f(n) = \mu^2(n)\frac{\chi_{d_2}(n)\chi_{d_3}^2(n)}{3^{\omega(n)}}\mathbb{1}_{\{p|n \implies p \equiv 1 \bmod 3\}}.$$
    Note that $f(n)$ is multiplicative and bounded in absolute value by one; in particular $|f(n)| \leq \tau_k(n) = \tau_1(n)$. We want to verify that $f(n)$ satisfies condition \eqref{KOKOU condition} of \ref{Theorem 13.2 KOUKOU}, i.e. we wish to show that for all $A' > 0$ and $x \geq Q$,
    $$
    \sum_{p \leq x} f(p)\log(p) = \mathcal{O}_{A'}\left(\frac{x}{(\log x)^{A'}}\right).
    $$
    So let $x \geq Q$ and $A'>0$. Then
    \begin{align*}
    \allowdisplaybreaks
        &\sum_{p \leq x} f(p)\log(p)
        = 
        \sums{
        p \leq x\\
        p \equiv 1 \bmod 3} 
        \frac{\chi_{d_2}(p)\chi_{d_3}^2(p)}{3}\log p
        \allowdisplaybreaks\\=
        &\sums{
        b \in (\sfrac{\Z}{3d_2d_3\Z})^{\ast} \\
        b \equiv 1 \bmod 3
        }
        \sums{
        p \leq x \\
        p \equiv b \bmod 3d_2d_3
        }
        \frac{\chi_{d_2}(p)\chi_{d_3}^2(p)}{3}\log p
        \allowdisplaybreaks\\=
        &\sums{
        b \in (\sfrac{\Z}{3d_2d_3\Z})^{\ast} \\
        b \equiv 1 \bmod 3
        }
        \frac{\chi_{d_2}(b)\chi_{d_3}^2(b)}{3}
        \sums{
        p \leq x \\
        p \equiv b \bmod 3d_2d_3
        }
        \log p
        \allowdisplaybreaks\\ =
        &\sums{
        b \in (\sfrac{\Z}{3d_2d_3\Z})^{\ast} \\
        b \equiv 1 \bmod 3
        }
        \frac{\chi_{d_2}(b)\chi_{d_3}^2(b)}{3}
        \left(\frac{x}{\phi(3d_2d_3)} + \mathcal{O}\bigg(x \exp\big(-C_{N}(\log x)^{\frac{1}{2}}\big)\bigg)\right).
    \end{align*}
    We applied the Siegel--Walfisz theorem in the last step, defining $N=200M$ and using that $3d_2d_3 \leq (3\log B)^{2 M} = (\log Q)^{200M} \leq (\log x)^{N}$. Here $C_{N} > 0$ is some constant depending on $N$, coming from the Siegel--Walfisz theorem. Since $d_2, d_3$ are coprime and not both equal to 1, $\chi_{d_2}\chi_{d_3}^2$ is not the trivial character. By orthogonality of characters, we get
    \begin{gather*}
        \allowdisplaybreaks
        \sums{
    b \in (\sfrac{\Z}{3d_2d_3\Z})^{\ast} \\
    b \equiv 1 \bmod 3
    }
    \frac{\chi_{d_2}(b)\chi_{d_3}^2(b)}{3}
    =
    0,
    \end{gather*}
    thus
    \begin{align*}
        \allowdisplaybreaks
        \sum_{p \leq x} f(p)\log(p) 
        &= 
        \mathcal{O}\left(d_2d_3x \exp\big(-C_{N}(\log x)^{\frac{1}{2}}\big)\right)
        \allowdisplaybreaks\\
        &= 
        \mathcal{O}\left(x \exp\big(-C'_{N}(\log x)^{\frac{1}{2}}\big)\right)
        =
        \mathcal{O}\left(\frac{x}{(\log x)^{A'}}\right).
    \end{align*}
    
   Note that for $\epsilon = 1$, we have $B \geq \exp((\log Q)^{1+\epsilon}) = \exp((3 \log B)^{\frac{2}{100}})$, so we may apply Theorem \ref{Theorem 13.2 KOUKOU} to our sum as follows
    \begin{align*}
    \allowdisplaybreaks
    &\bigg|\sums{
    B^{\sfrac{1}{m}} < e \leq B \\
    p \mid e \implies p \equiv 1 \bmod 3
    }
    \mu^2(e)\frac{\chi_{d_2}(e)\chi_{d_3}^2(e)}{3^{\omega(e)}}\bigg|
    \allowdisplaybreaks\\ \leq
    &\bigg|\sums{
    1 \leq e \leq B \\
    p \mid e \implies p \equiv 1 \bmod 3
    }
    \mu^2(e)\frac{\chi_{d_2}(e)\chi_{d_3}^2(e)}{3^{\omega(e)}}\bigg|
    +
    \bigg|\sums{
    1 \leq e \leq B^{\sfrac{1}{m}} \\
    p \mid e \implies p \equiv 1 \bmod 3
    }
    \mu^2(e)\frac{\chi_{d_2}(e)\chi_{d_3}^2(e)}{3^{\omega(e)}}\bigg|
    \allowdisplaybreaks\\ \leq
    & \bigg|\sums{e \leq B}f(e)\bigg| + B^{\sfrac{1}{m}}
    =
    \OO\left(\frac{B(\log Q)^{J+1}}{(\log B)^{J+1}}\right)
    +
    B^{\sfrac{1}{m}}.
    \end{align*}
    In the last step we used Theorem \ref{Theorem 13.2 KOUKOU} with the parameters specified at the start of the proof and importantly followed the convention in Remark \ref{convention}. To end the proof, we check that the above bound is as strong as required in the lemma,
    \begin{align*}
        \allowdisplaybreaks
    &\OO\left(\frac{B(\log Q)^{J+1}}{(\log B)^{J+1}}\right)
    +
    B^{\sfrac{1}{m}}
    =
    \OO\left(\frac{B((3\log B)^\frac{J+1}{100})}{(\log B)^{J+1}}\right)
    +
    B^{\sfrac{1}{m}}
    \allowdisplaybreaks\\ &=
    \OO\left(\frac{B}{(\log B)^{\frac{99}{100}(J+1)}}\right)
    =
    \OO\left(\frac{B}{(\log B)^{C}}\right),
    \end{align*}
    where $J$ is chosen large enough in terms of $C$. Closer inspection of the proof and the statements of Siegel--Walfisz and Theorem \ref{Theorem 13.2 KOUKOU} reveals that the implied constant depends at most on $C$, $m$ and $M$.
\end{proof}
\subsection{Proof of Theorem \ref{AsymptoticFormula}}
    The larger idea of the proof is to rewrite $N(A,B)$ as a sum over cubic characters, split the sum into several ranges and then apply character summation results. The main term will come from the case where all characters are principal.
    
    We rewrite $N(A,B)$. Observe that Lemma \ref{p nmid ab} and Lemma \ref{p mid ab} imply that for ``admissible" pairs $(a,b)$ of coprime, square-free integers, not divisible by three, $\phi_{a,b}$ is locally soluble everywhere except possibly at primes that satisfy $p \mid ab$ and $p \equiv 1$ mod $3$. Furthermore Lemma \ref{p mid ab} provides an indicator function for ``$\phi_{a,b}$ has a solution in $\QP$" for such primes
    \begin{gather*} \allowdisplaybreaks
        N(A,B)
        = 
        \sum_{\substack{a \leq A, \; b \leq B \\ \gcd(ab,3) = 1, \; \gcd(a,b) = 1 \\ a,b \; \text{square-free} \\ (a,b) \; \text{admissible pair}}} \mathbb{1}_{\{\phi_{a,b} \, \text{is locally soluble everywhere}\}} \\
        = 
        \sum_{\substack{a \leq A, \; b \leq B \\
        (a,b) \text{ admissible pair}}} \;
        \mu^2(3ab)\prod_{\substack{p \mid a \\ p \equiv 1 \bmod 3}} \frac{1}{3}(1+\chi_p(b)+\chi_p^2(b))
        \prod_{\substack{p \mid b \\ p \equiv 1 \bmod 3}} \frac{1}{3}(1+\chi_p(a)+\chi_p^2(a)).
    \end{gather*}
    We collect the prime factors of $a$ and $b$ into new variables, according to their residue class mod 3. Define $a = a_1a_2$ and $b= b_1b_2$, where
    \[
    \begin{cases}
        p \mid a_1b_1 \implies p \equiv 1 \bmod 3, \\
        p \mid a_2b_2 \implies p \equiv 2 \bmod 3. \\
    \end{cases}
    \]
    For improved readability collect the summation conditions into the ``$\flat$" symbol:
    \[
    \sum_{
     \substack{
     a_1a_2 \leq A, \; b_1b_2 \leq B \\
    (a_1a_2,b_1b_2) \, \text{is an admissible pair}\\
    p \mid a_ib_i \implies p \equiv i \bmod 3 \; \text{for} \; i \in \{1,2\}
    }} 
    \coloneqq 
    \sum_{
    \substack{a_1a_2 \leq A \\
        b_1b_2 \leq B \\
        \flat}}.
    \]
    With the new notation and variable names we have
    \begin{gather} \allowdisplaybreaks
    \begin{align*} \allowdisplaybreaks
        N(A,B) 
        &= 
        \sum_{\substack{a_1a_2 \leq A \\
        b_1b_2 \leq B \\ 
        \flat}} \;
        \frac{\mu^2(3a_1a_2b_1b_2)}{3^{\omega(a_1b_1)}}
        \prod_{p \mid a_1} \left(1+\chi_p(b_1b_2)+\chi_p^2(b_1b_2)\right) 
        \prod_{p \mid b_1} \left(1+\chi_p(a_1a_2)+\chi_p^2(a_1a_2)\right) \\
        &= 
        \sum_{\substack{a_1a_2 \leq A \\
        b_1b_2 \leq B \\ 
        \flat}} \;
        \frac{\mu^2(3a_1a_2b_1b_2)}{3^{\omega(a_1b_1)}}
        \sum_{d_1d_2d_3 = a_1} \chi_{d_2}(b_1b_2)\chi_{d_3}^2(b_1b_2) \sum_{e_1e_2e_3 = b_1} \chi_{e_2}(a_1a_2)\chi_{e_3}^2(a_1a_2)
    \end{align*}
    \\
    =   \sum_{
        \substack{
        d_1d_2d_3a_2 \leq A \\
        e_1e_2e_3b_2 \leq B \\
        \flat}
        }
        \frac{\mu^2(3d_1d_2d_3a_2e_1e_2e_3b_2)}{3^{\omega(d_1d_2d_3e_1e_2e_3)}} 
        \chi_{d_2}(e_1e_2e_3b_2)
        \chi_{d_3}^2(e_1e_2e_3b_2)
        \chi_{e_2}(d_1d_2d_3a_2)
        \chi_{e_3}^2(d_1d_2d_3a_2).
        \label{BigCharacterSum}
    \end{gather}
    Note that we implicitly also replaced $a_1$ with $d_1d_2d_3$ and $b_1$ with $e_1e_2e_3$ in the summation conditions $\flat$.
    
    The ``big character sum" \eqref{BigCharacterSum} is of the right type to apply oscillation results. In what follows we will call the summation variables $d_2, d_3, e_2, e_3$ ``indices", because they appear in the indices of the characters $\chi_{d_2}, \chi_{d_3}, \chi_{e_2}, \chi_{e_3}$. Throughout the proof we use a parameter $V$ (that will be chosen as a large power of $(\log AB)$ at the end of the proof) to partition the range of summation in \eqref{BigCharacterSum}. There are four cases: With the first and second case, we deal with all summation ranges where at least one of the indices is larger than $V$. The third case covers the ranges where all indices are smaller than $V$, but not all characters are trivial. In the fourth case all indices are equal to one, i.e. all characters are trivial, and we obtain the main term of the theorem. All cases except the fourth are bounded such that they fit into the error term.
    
    For the first case assume that at least one index (one of the variables $d_2,d_3,e_2,e_3$) in \eqref{BigCharacterSum} is larger than $V$, say $d_2 > V$, and that at least one of the variables inside the argument of the respective character is larger than $V$, say $e_1 >V$. To clarify, there are 12 ``sub-cases" of case one, i.e. 12 ranges of summation that may all be bounded in the same way. We choose to show the argument for the ``sub-case" $d_2 > V, e_1>V$. We apply the triangle inequality to isolate $\chi_{d_2}$ and use Lemma \ref{DoubleOscillation} to obtain the following contribution to $N(A,B)$,
    \begin{align*} 
    \allowdisplaybreaks
    &\big|N(A,B)_{d_2>V, e_1>V}\big|
        \leq 
        \sum_{
        \substack{
        d_1d_3a_2 \leq A\\
        e_2e_3b_2 \leq B\\
        \flat
        }}
        \bigg|
        \sum_{
        \substack{
        V < d_2 \leq \frac{A}{d_1d_3a_2}\\
        V < e_1 \leq \frac{B}{e_2e_3b_2}\\
        \flat}
        }
        \frac{\mu^2(3d_1d_2d_3a_2e_1e_2e_3b_2)}{3^{\omega(d_1d_2d_3e_1e_2e_3)}}
        \chi_{d_2}(e_1e_2e_3b_2)
        \bigg|
        \allowdisplaybreaks\\
        &\leq
        \sum_{
        \substack{
        d_1d_3a_2 \leq A\\
        e_2e_3b_2 \leq B\\
        \flat
        }}
        \frac{\mu^2(d_1d_3a_2e_2e_3b_2)}{3^{\omega(d_1d_3e_2e_3)}}
        \bigg|
        \sum_{
        \substack{
        V < d_2 \leq \frac{A}{d_1d_3a_2}\\
        V < e_1 \leq \frac{B}{e_2e_3b_2}\\
        \flat
        }}
        \alpha_{d_2}\beta_{e_1}\chi_{d_2}(e_1)
        \bigg|
        \allowdisplaybreaks\\
        &\ll
        \sum_{
        \substack{
        d_1d_3a_2 \leq A\\
        e_2e_3b_2 \leq B\\
        \flat
        }}
        \frac{\mu^2(d_1d_3a_2e_2e_3b_2)}{3^{\omega(d_1d_3e_2e_3)}}
        \frac{A}{d_1d_3a_2}\frac{B}{e_1e_3b_2}
        \left(\bigg(\frac{A}{d_1d_3a_2}\bigg)^{-\kappa_1} + \bigg(\frac{B}{e_1e_3b_2}\bigg)^{-\kappa_1}\right)
        \log(AB)^{\kappa_2 }
        \allowdisplaybreaks\\
        &\ll
        \frac{AB\log(AB)^{\kappa_2 }}{V^{\kappa_1}}
        \sum_{
        \substack{
        d_1d_3a_2 \leq A\\
        e_2e_3b_2 \leq B\\
        \flat
        }}
        \frac{1}{3^{\omega(d_1d_3e_2e_3)}}
        \frac{\mu^2(d_1d_3a_2e_2e_3b_2)}{d_1d_3a_2e_1e_3b_2},
        \end{align*}
    where $\alpha_{d_2}$ and $\beta_{e_1}$ are complex numbers with absolute values $\leq 1$, absorbing the summation condition ``admissible pair", the factor $\frac{1}{3^{\omega(d_2e_1)}}\chi_{d_2}(e_2e_3b_2)$ and the fact that $e_1,d_2$ are square-free.
    
    To evaluate the last sum, remember that the remaining summation conditions are $p\mid d_1d_3e_2e_3 \implies p \equiv 1$ mod 3 and $p \mid a_2b_2 \implies p\equiv 2$ mod 3. We make two variable substitutions, $m \coloneqq d_1d_3e_2e_3$ and $n \coloneqq a_2b_2$,
    \begin{gather*} \allowdisplaybreaks 
        \sum_{
        \substack{
        d_1d_3a_2 \leq A\\
        e_2e_3b_2 \leq B\\
        \flat
        }}
        \frac{1}{3^{\omega(d_1d_3e_2e_3)}}
        \frac{\mu^2(d_1d_3a_2e_2e_3b_2)}{d_1d_3a_2e_1e_3b_2}
        \leq
        \sum_{
        \substack{
        m \leq AB\\
        n \leq AB\\
        \flat
        }}
        \frac{4^{\omega(m)}2^{\omega(n)}}{3^{\omega(m)}}
        \frac{\mu^2(mn)}{mn}
        \allowdisplaybreaks\\
        \leq
        \sum_{
        \substack{
        m \leq AB\\
        p \mid m \implies p \equiv 1 \bmod 3
        }}
        \left(\frac{4}{3}\right)^{\omega(m)}
        \frac{\mu^2(m)}{m}
        \sum_{
        \substack{
        n \leq AB \\
        p \mid n \implies p \equiv 2 \bmod 3
        }
        }
        \frac{\mu^2(n)}{n}
        \ll
        (\log AB)^{\frac{4}{3}}
        (\log AB),
    \end{gather*}
    where in the last step we used that $f(m) \coloneqq (\frac{4}{3})^{\omega(m)}\mu^2(m)$ is a multiplicative function, thus $\sum_{m\leq AB} \frac{f(m)}{m} 
    \leq \prod_{p \leq AB}(1+\frac{f(p)}{p} + \frac{f(p^2)}{p^2} + ... )
    = \prod_{p \leq AB}(1+\frac{4}{3p}) 
    \ll \prod_{p \leq AB}(1+\frac{1}{p})^{\frac{4}{3}} 
    \ll (\log AB)^{\frac{4}{3}}$. Overall for the sub-case $d_2 > V$, $e_1 > V$ and all analogous sub-cases we get a contribution to $N(A,B)$ of 
    \begin{equation}
    \label{Case1}
    \OO\left(\frac{AB(\log AB)^{\kappa_2 + \frac{7}{3}}}{V^{\kappa_1}}\right).
    \end{equation}
    
    For the second case, assume that one of the indices in \eqref{BigCharacterSum} is larger than $V$ but none of the variables in the argument of that character are larger than $V$. We will show the sub-case that $d_2>V$ and $e_1,e_2,e_3,b_2 \leq V$, but our argument works the same for the other sub-cases. We use the triangle inequality and bound trivially
    \begin{align*} \allowdisplaybreaks
        \big|
        N(A,B)_{\substack{d_2 > V \\ 
        e_1,e_2,e_3,b_2 \leq V}}
        \big|
        &\leq
        \sum_{
        \substack{
        e_1e_2e_3b_2 \leq B \\
        e_1,e_2,e_3,b_2 \leq V \\
        \flat
        }
        }
        \sum_{
        \substack{
        d_1d_2d_3a_2 \leq A \\
        \flat
        }
        }
        \frac{\mu^2(3d_1d_2d_3a_2e_1e_2e_3b_2)}{3^{\omega(d_1d_2d_3e_1e_2e_3)}}
        \allowdisplaybreaks\\
        &\leq
        V^4
        \sum_{
        \substack{
        d_1d_2d_3a_2 \leq A \\
        p \mid d_1d_2d_3 \implies p \equiv 1\bmod 3 \\
        p \mid a_2 \implies p \equiv 2\bmod 3
        }
        }
        \frac{\mu^2(d_1d_2d_3a_2)}{3^{\omega(d_1d_2d_3)}}
        \allowdisplaybreaks
        \\
        &=
        V^4
        \sum_{
        \substack{
        n \leq A
        }
        }
        \mu^2(n)
        \allowdisplaybreaks\\
        &\leq
        V^4A.
    \end{align*}
    For all four analogous sub-cases where one of the indices is large, but none of the respective arguments, we get a contribution to $N(A,B)$ of 
    \begin{equation}
    \label{Case2}
    \OO\left(V^4A+V^4B\right).
    \end{equation}
    
    With the previous two cases we have fully dealt with the summation ranges where at least one of the indices in the big character sum \eqref{BigCharacterSum} is large. We now turn to the summation range where all indices are small, i.e. $d_2,d_3,e_2,e_3 \leq V$.

    For the third case assume that $d_2,d_3,e_2,e_3 \leq V$, but not $d_2 = d_3 = e_2 = e_3 = 1$, i.e. that at least one index is larger than one. We choose $1 < d_2 \leq V$, but there are four equivalent sub-cases. We want to apply a single oscillation result \ref{SingleOscillation} to the character $\chi_{d_2}(e_1e_2e_3b_2)$. More specifically we would like to apply it to a sum of the form $\sum_{e_1 \leq \frac{B}{e_2e_3b_2}} \frac{\chi_{d_2}(e_1)}{3^{\omega(e_1)}}$ to obtain a $\log(\frac{B}{e_2e_3b_2})^{-C}$ saving compared to the trivial bound. If no further restrictions are placed on $e_1$, the $\log(\frac{B}{e_2e_3b_2})^{-C}$ saving becomes poor when $e_2e_3b_2$ is close to $B$. Thus we need to divide the third case into two further ranges: one where $e_1,b_2 \leq B^{\sfrac{1}{k}}$ (call this case 3a) and one where $e_1 > B^{\sfrac{1}{k}}$ or $b_2 > B^{\sfrac{1}{k}}$ (call this case 3b). Here $k$ is some integer greater than 2 that can be chosen later for convenience ($k=4$ works for example).

    So we turn to case 3a, i.e. we assume $d_2,d_3,e_2,e_3 \leq V$, $d_2 > 1$, $b_2,e_1 \leq B^{\sfrac{1}{k}}$. Then we may bound trivially, similarly to case 2:
    \begin{align*} \allowdisplaybreaks
    \big|N(A,B)_{\substack{
    d_2,d_3,e_2,e_3 \leq V \\
    d_2 > 1 \\
    e_1,b_2 \leq B^{\sfrac{1}{k}}
    }}\big|
    &\leq
    \sum_{\substack{
    e_1e_2e_3b_2 \leq B \\
    e_1,b_2 \leq B^{\sfrac{1}{k}}\\
    e_2,e_3 \leq V \\
    \flat
    }}
    \sum_{\substack{
    d_1d_2d_3a_2 \leq A \\
    \flat
    }}
    \frac{\mu^2(3d_1d_2d_3a_2e_1e_2e_3b_2)}{3^{\omega(d_1d_2d_3e_1e_2e_3)}}
    \allowdisplaybreaks\\
    &\leq
    B^{\sfrac{2}{k}}V^2
    \sum_{\substack{
    d_1d_2d_3a_2 \leq A \\
    p \mid d_1d_2d_3 \implies p \equiv 1 \bmod 3 \\
    p \mid a_2 \implies p \equiv 2 \bmod 3
    }}
    \frac{\mu^2(d_1d_2d_3a_2)}{3^{\omega(d_1d_2d_3)}}
    \allowdisplaybreaks\\
    &\leq
    B^{\sfrac{2}{k}}V^2
    \sum_{
    n \leq A
    }
    \mu^2(n)
    \allowdisplaybreaks\\
    &\leq
    B^{\sfrac{2}{k}}V^2A.
    \end{align*}
    For all cases analogous to 3a, we get a contribution to $N(A,B)$ of
    \begin{equation}
    \label{Case3a}
    \OO\left(ABV^2\big(A^{\frac{2-k}{k}} + B^{\frac{2-k}{k}}\big)\right).
    \end{equation}
    
    We turn to case 3b, i.e. we assume $d_2,d_3,e_2,e_3 \leq V$, $d_2 > 1$ and that at least one of $b_2, e_1$ is larger than $B^{\sfrac{1}{k}}$. We choose to show the argument for $e_1 > B^{\sfrac{1}{k}}$, but Lemma \ref{SingleOscillation} is flexible enough to deal with the slightly different sub-case $b_2 > B^{\sfrac{1}{k}}$ in just the same way. Let $C>0$ be a parameter to be chosen later. Then we have,
    \begin{align*} \allowdisplaybreaks
        &\big|
        N(A,B)_{\substack{
        d_2,d_3,e_2,e_3 \leq V \\ 
        d_2 > 1\\
        e_1 > B^{\sfrac{1}{k}}}}
        \big|
        \\
        &\leq
        \sum_{
        \substack{
        d_1d_2d_3a_2 \leq A \\
        e_2e_3b_2 \leq B \\
        d_3,e_2,e_3 \leq V \\
        1 < d_2 \leq V \\
        \flat
        }
        }
        \frac{\mu^2(3d_1d_2d_3a_2e_2e_3b_2)}{3^{\omega(d_1d_2d_3e_2e_3)}} \;
        \bigg|
        \sum_{
        \substack{
        B^{\sfrac{1}{k}} < e_1 \leq \frac{B}{e_2e_3b_2} \\
        p \mid e_1 \implies p \equiv 1 \bmod 3
        }
        }
        \mu^2(e_1)\frac{\chi_{d_2}(e_1)\chi_{d_3}^2(e_1)}{3^{\omega(e_1)}}
        \bigg|
        \allowdisplaybreaks\\
        &\ll_C 
        \sum_{\substack{
        d_1d_2d_3a_2 \leq A \\
        \flat
        }}
        \sum_{
        \substack{
        e_2e_3b_2 \leq B \\
        \flat
        }
        }
        \frac{\mu^2(3d_1d_2d_3a_2e_2e_3b_2)}{3^{\omega(d_1d_2d_3)}3^{\omega(e_2e_3)}}
        \frac{B}{e_2e_3b_2}(\log(B^{\sfrac{1}{k}}))^{-C}
        \allowdisplaybreaks\\
        &\ll_C
        \frac{B}{(\log B)^C}
        \sum_{\substack{
        d_1d_2d_3a_2 \leq A \\
        \flat
        }}
        \frac{\mu^2(3d_1d_2d_3a_2)}{3^{\omega(d_1d_2d_3)}}
        \sum_{
        \substack{
        e_2e_3b_2 \leq B \\
        \flat
        }}
        \frac{\mu^2(3e_2e_3b_2)}{3^{\omega(e_2e_3)}}
        \frac{1}{e_2e_3b_2}
        \allowdisplaybreaks\\
        &\ll_C
        \frac{B}{(\log B)^C}
        \sum_{\substack{
        m \leq A
        }}
        \sum_{
        \substack{
        n \leq B
        }}
        \frac{1}{n}
        \allowdisplaybreaks\\
        &\ll_C
        \frac{AB}{(\log B)^{C-1}}.
    \end{align*}
    Here we applied Lemma \ref{SingleOscillation} to evaluate the character sum in the second line and used that $B^{\sfrac{1}{k}} < \frac{B}{e_2e_3b_2}$, hence $\log(B^{\sfrac{1}{k}})^{-C} > \log(\frac{B}{e_2e_3b_2})^{-C}$. Note that in the second line, $d_2,d_3$ are bounded by a large power of $\log B$ as required in Lemma \ref{SingleOscillation} because of the assumption that $A,B \geq \exp ((\log AB)^{\delta})$ and the fact that $V$ is a power of $\log AB$ (yet to be chosen). For all sub-cases analogous to 3b we get a contribution to $N(A,B)$ of
    \begin{gather}
    \label{Case3b}
    \OO\left(ABV^2\bigg(\frac{1}{(\log A)^{C-1}}+\frac{1}{(\log B)^{C-1}}\bigg)\right).
    \end{gather}
    
    This leaves us with the fourth and last range of summation, the range where $d_2 = d_3 = e_2 = e_3 = 1$, i.e. all characters in the big character sum \eqref{BigCharacterSum} are trivial. This range constitutes the main contribution to $N(A,B)$.
    
    For the fourth case, assume $d_2 = d_3 = e_2 = e_3 = 1$. Then the big character sum simplifies to
    \begin{align} \allowdisplaybreaks
    \nonumber
    N(A,B)_{\substack{
    d_2=d_3= 1 \\
    e_2=e_3=1
    }}
    =
    \sum_{\substack{
    d_1a_2 \leq A \\
    e_1b_2 \leq B \\
    \flat
    }} 
    \frac{\mu^2(3d_1a_2e_1b_2)}{3^{\omega(d_1e_1)}}
    =&
    \sum_{\substack{
    m \leq A
    }}
    \frac{\mu^2(3m)}{3^{\omega_1(m)}}
    \sum_{\substack{
    n \leq B \\
    (m,n) \, \text{admissible}
    }}
    \frac{\mu^2(3mn)}{3^{\omega_1(n)}}
    \allowdisplaybreaks\\
    =&
    \label{trivialwithadmissible}
    \sum_{\substack{
    m \leq A
    }}
    g(m)
    \sum_{\substack{
    n \leq B \\
    (m,n) \, \text{admissible}
    }}
    g_m(n).
    \end{align}
    Here we defined $\omega_1(n)$ to be the function that counts the number of prime divisors of $n$ which are congruent to 1 mod 3, $\omega_1(n) \coloneqq \#\{p \text{ prime} : p \equiv 1 \text{ mod 3 and } p \mid n \}$. Furthermore we define the functions
    $$
    g_m(n) \coloneqq \frac{\mu^2(3mn)}{3^{\omega_1(n)}} \quad \text{and} \quad g(m) \coloneqq \frac{\mu^2(3m)}{3^{\omega_1(m)}}
    $$
    and note that they are multiplicative. In order to evaluate the sum \eqref{trivialwithadmissible}, we want to apply a theorem on sums of multiplicative functions, Theorem \ref{Theorem 13.2 KOUKOU}. But in order to apply the theorem we first have to deal with the summation condition ``$(m,n)$ admissible", which is not multiplicative.
    
    Denote by $G = (\sfrac{\Z}{27\Z})^{\ast}$ the multiplicative group modulo 27 and by $\widehat{G} = \widehat{(\sfrac{\Z}{27\Z})^{\ast}}$ the group of characters modulo 27. Using orthogonality of characters, we may express the indicator function of a pair $(m,n)$ being admissible as follows,
    \begin{gather*}
        \mathbb{1}_{\{(m,n) \text{ admissible}\}} 
        = 
        \frac{1}{\phi(27)^2}
        \sums{\delta, \epsilon \bmod 27\\
        (\delta, \epsilon) \text{ admissible}}
        \sum_{\chi, \chi' \in G}
        \chi(\delta^{-1}m)\chi'(\epsilon^{-1}n).
    \end{gather*}
    We use this expression to rewrite \eqref{trivialwithadmissible}
    \begin{align}
    \allowdisplaybreaks
    \nonumber
    &N(A,B)_{\substack{
    d_2=d_3= 1 \\
    e_2=e_3=1
    }}
    =
    \sum_{\substack{
    m \leq A
    }}
    g(m)
    \sum_{\substack{
    n \leq B \\
    (m,n) \, \text{admissible}
    }}
    g_m(n)
    \allowdisplaybreaks\\
    \label{Admissiblepulledout}
    &=
    \frac{1}{\phi(27)^2}
    \sums{\delta, \epsilon \bmod 27\\
    (\delta, \epsilon) \text{ admissible}}
    \sum_{\chi, \chi' \in G}
    \chi(\delta^{-1})\chi'(\epsilon^{-1})
    \sum_{
    m \leq A
    }
    \chi(m)g(m)
    \sum_{
    n \leq B
    }
    \chi'(n)g_m(n).
    \end{align}
    The sum $\sum_{n \leq B}\chi'(n)g_m(n)$ is now of the right shape for an application of Theorem \ref{Theorem 13.2 KOUKOU}, so we check that the multiplicative function $\chi'(n)g_m(n)$ satisfies the requirement \eqref{KOKOU condition} for any choice of $\chi'$ and $m$. There are three possibilities we treat separately;
    \begin{enumerate}[i)]
    \allowdisplaybreaks
        \item $\chi'$ is the trivial character,
        \item $\chi'$ is non-trivial on $G$ but trivial on the index two subgroup $H = \{z \in G : z \equiv 1\bmod 3\}$,
        \item $\chi'$ is non-trivial on $H$.
    \end{enumerate}
    
    i) If $\chi'$ is the trivial character, we show that $\chi'(n)g_m(n) = g_m(n)$ satisfies condition \eqref{KOKOU condition} with parameters $\alpha = \sfrac{2}{3}$, $k=1$, $J = 2$, $Q = \exp((\log B)^{\frac{1}{100}})$, $\epsilon = 1$. For all $x \geq Q$ and all $A' > 0$,
    \begin{align*}
    \allowdisplaybreaks
        \sum_{p \leq x}\chi'(p)g_m(p) \log p 
        &= 
        \sums{
        p \leq x \\
        p \equiv 1 \bmod 3} 
        \frac{1}{3} \log p 
        + 
        \sums{
        p \leq x \\
        p \equiv 2 \bmod 3}
        \log p 
        - 
        \sums{p \mid m} \frac{1}{3^{\omega_1(p)}} \log p
        \allowdisplaybreaks\\
        &=
        \frac{1}{3}\cdot\frac{1}{\phi(3)}x + \frac{1}{\phi(3)}x + \OO(x \exp( - C_{W}(\log x)^{\sfrac{1}{2}})) - \OO(\omega(m)\log m)
        \allowdisplaybreaks\\
        &=
        \frac{2}{3}x + \OO\left(\frac{x}{(\log x)^{A'}}\right).
    \end{align*}
    In the second line we have applied the Siegel--Walfisz theorem and $C_{W}>0$ is some constant coming from this theorem. Since we will apply Siegel--Walfisz multiple times in the remainder of the proof and the associated constants will always just be absorbed into $\OO(\frac{x}{(\log x)^{A'}})$, we will abuse notation and call them $C_W$ every time. Note that we used that $\log m \leq \log A$ and the assumption that $A,B \geq \exp ((\log AB)^{\delta})$ to bound the second error term.

    In case that $\chi'$ is trivial, applying Theorem \ref{Theorem 13.2 KOUKOU} (with the parameters chosen above) to the sum $\sum
    \chi'(n)g_m(n)$ yields
    \begin{align}
    \label{gmtrivial}
    \sums{
        n \leq B
    }
    \chi'(n)g_m(n)
    =
    \frac{B}{(\log B)^{\frac{1}{3}}\Gamma(\frac{2}{3})}c_0(m) + \OO\bigg(\frac{B}{(\log B)^{\frac{4}{3}}}\bigg),
    \end{align}
    where
    $$
    c_0(m) = \prod_{p}\left(1+\frac{g(p)}{p}\right)\left(1- \frac{1}{p}\right)^{\frac{2}{3}}\prod_{p\mid m}\left(1+\frac{g(p)}{p}\right)^{-1}.
    $$
    
    ii) If $\chi'$ is non-trivial on $G$ but trivial on $H$, there is a small difference to case i) when showing that $\chi'(n)g_m(n)$ satisfies condition \eqref{KOKOU condition}. Note that if $\chi'$ is trivial on the index 2 subgroup $H$ but non-trivial on $2H$, then $\chi'$ takes values in $\{ +1,-1\}$. Upon closer inspection there is only one character that satisfies these properties; $\chi'$ has to be equal to 1 on $H$ and equal to $-1$ on $2H$. Thus the sum over primes $p \equiv 2 \bmod 3$ evaluates to
    \begin{align*}
    \allowdisplaybreaks
        \sums{
        p \leq x \\
        p \equiv 2 \bmod 3}
        \chi'(p)\log p
        &=
        \sums{\delta \bmod 27\\
        \delta \equiv 2 \bmod 3}
        \chi'(\delta)
        \sums{
        p \leq x \\
        p \equiv \delta \bmod 27}
        \log p
        \allowdisplaybreaks\\
        &=
        -9
        \left(\frac{1}{\phi(27)}x+\OO(x \exp( - C_W (\log x)^{\sfrac{1}{2}}))\right)
        \allowdisplaybreaks\\
        &=
        -\frac{1}{2}x+\OO(x \exp( - C_W (\log x)^{\sfrac{1}{2}})).
    \end{align*}
    Here we applied the Siegel--Walfisz theorem, and $C_W$ is some constant coming from this application. The sum over primes $p \equiv 1 \bmod 3$ and the sum over primes that divide $m$ can be treated the same as in i). Thus (for $Q$ as in i)) we obtain that for all $x \geq Q$ and $A'>0$,
    \begin{align*}
        \sum_{p \leq x}\chi'(p)g_m(p) \log p 
        &=
        \frac{1}{6}x - \frac{1}{2}x + \OO(x \exp( - C_{W} (\log x)^{\sfrac{1}{2}})) - \OO(\omega(m)\log m)
        \allowdisplaybreaks\\
        &= - \frac{1}{3}x + \OO\left(\frac{x}{(\log x)^{A'}}\right).
    \end{align*}
    In case that $\chi'$ is non-trivial on $G$ but trivial on $H$, applying Theorem \ref{Theorem 13.2 KOUKOU} (with $\alpha = -\sfrac{1}{3}$ and otherwise the same parameters as in i)) to the sum $\sum
    \chi'(n)g_m(n)$ yields
    \begin{align}
    \label{gmtrivialonH}
    \sums{
        n \leq B
    }
    \chi'(n)g_m(n)
    =
    \OO\bigg(\frac{B}{(\log B)^{\frac{4}{3}}}\bigg).
    \end{align}
    
    iii) If $\chi'$ is non-trivial on $H$, $\chi'(n)g_m(n)$ satisfies condition \eqref{KOKOU condition} with $\alpha = 0$, $k = 1$, $Q = \exp((\log B)^{\frac{1}{100}})$, $\epsilon = 1$ and $J=1$. We split the sum as we did in i),
    \begin{align*}
        \sum_{p \leq x}\chi'(p)g_m(p) \log p 
        &= 
        \sums{
        p \leq x \\
        p \equiv 1 \bmod 3} 
        \frac{1}{3} \chi'(p)\log p 
        + 
        \sums{
        p \leq x \\
        p \equiv 2 \bmod 3}
        \chi'(p)\log p 
        - 
        \sums{p \mid m} \frac{1}{3^{\omega_1(p)}} \chi'(p)\log p.
    \end{align*}
    However, $\chi'$ is now non-trivial and we get a cancellation of the main term due to orthogonality of characters. For the sum over $p \equiv 2 \bmod 3$ we get that for all $x \geq Q$ and $A' >0$,
    \begin{align*}
    \allowdisplaybreaks
        \sums{
        p \leq x \\
        p \equiv 2 \bmod 3}
        \chi'(p)\log p
        &=
        \sums{\delta \bmod 27\\
        \delta \equiv 2 \bmod 3}
        \chi'(\delta)
        \sums{
        p \leq x \\
        p \equiv \delta \bmod 27}
        \log p
        \allowdisplaybreaks\\
        &=
        \chi'(2)\sums{\xi \in H}\chi'(\xi)
        \left(\frac{1}{\phi(27)}x+\OO(x \exp( - C_W (\log x)^{\sfrac{1}{2}}))\right)
        \allowdisplaybreaks\\
        &=
        \OO(x \exp( - C_W (\log x)^{\sfrac{1}{2}}))
        \allowdisplaybreaks\\
        &=
        \OO\left(\frac{x}{(\log x)^{A'}}\right).
    \end{align*}
    The sum over $p \equiv 1 \bmod 3$ can be bounded analogously and the sum over $p \mid m$ fits into the same error term, as shown in i). Thus $\chi'(n)g_m(n)$ satisfies condition \eqref{KOKOU condition} with $\alpha = 0$. Applying Theorem \ref{Theorem 13.2 KOUKOU} (with parameters $\alpha = 0$, $k=1$, $Q = \exp((\log B)^{\frac{1}{100}})$, $\epsilon = 1$ and $J=1$) yields
    \begin{align}
    \label{gmnontrivial}
    \sums{
        n \leq B
    }
    \chi'(n)g_m(n)
    =
    \OO\left(\frac{B(\log Q)^{J+1}}{(\log B)^{J+1}}\right)
    =
    \OO\left(\frac{B}{(\log B)^{\frac{4}{3}}}\right).
    \end{align}
    Importantly we have followed the convention in Remark \ref{convention}.

    Combining \eqref{gmtrivial}, \eqref{gmtrivialonH} and \eqref{gmnontrivial}, we see that \eqref{Admissiblepulledout} becomes
    \begin{align}
    \allowdisplaybreaks
    \nonumber
    N(A,B)_{\substack{
    d_2=d_3= 1 \\
    e_2=e_3=1
    }}
    =
    \frac{1}{\phi(27)^2}&
    \sums{\delta, \epsilon \bmod 27\\
    (\delta, \epsilon) \text{ admissible}}
    \sum_{\chi, \chi' \in G}
    \chi(\delta^{-1})\chi'(\epsilon^{-1})
    \allowdisplaybreaks\\
    \label{gettingthere}
    &\sum_{
    m \leq A
    }
    \chi(m)g(m)
    \left(\mathbb{1}_{\{\chi' = \chi_0\}}\frac{B}{(\log B)^{\frac{1}{3}}\Gamma(\frac{2}{3})}c_0(m) + \OO\bigg(\frac{B}{(\log B)^{\frac{4}{3}}}\bigg)\right),
    \end{align}
    where $\mathbb{1}_{\{\chi' = \chi_0\}}$ is the indicator function of $\chi'$ being trivial and
    $$
    c_0(m) = \prod_{p}\left(1+\frac{g(p)}{p}\right)\left(1- \frac{1}{p}\right)^{\frac{2}{3}}\prod_{p\mid m}\left(1+\frac{g(p)}{p}\right)^{-1}.
    $$
    To evaluate the sum over $m$ in \eqref{gettingthere}, one can perform a calculation that is analogous to the above. Again one has to distinguish three cases, depending on the (non-)triviality of the character $\chi$, and use a combination of Siegel--Walfisz and Theorem \ref{Theorem 13.2 KOUKOU}. The main term comes from the summand where $\chi$ and $\chi'$ are both trivial. All other combinations of characters lead to smaller contributions that fit into the error term (as in cases ii) and iii) above), so the sum over $m$ becomes
    \begin{align}
    \allowdisplaybreaks
    \nonumber
        &\sum_{m \leq A}
        \chi(m)g(m)
        \left(\mathbb{1}_{\{\chi' = \chi_0\}}\frac{B}{(\log B)^{\frac{1}{3}}\Gamma(\frac{2}{3})}c_0(m) + \OO\bigg(\frac{B}{(\log B)^{\frac{4}{3}}}\bigg)\right)
        \allowdisplaybreaks\\
        &=
        \label{almostdone}
        \mathbb{1}_{\{\chi = \chi' = \chi_0\}}c_1\frac{AB}{(\log A)^{\frac{1}{3}}(\log B)^{\frac{1}{3}}(\Gamma(\frac{2}{3}))^2}
        +\OO\bigg(\frac{AB}{(\log A)^{\frac{1}{3}}(\log B)^{\frac{1}{3}}}\bigg(\frac{1}{\log A} + \frac{1}{\log B}\bigg)\bigg).        
    \end{align}
    Here $c_1$ is the constant
    $$
    c_1 = \prod_{p}\left(1+\frac{2}{3^{\omega_1(p)}p}\right)\left(1- \frac{1}{p}\right)^{\frac{4}{3}}.
    $$
    Substituting \eqref{almostdone} into \eqref{gettingthere} we finally obtain the contribution of case 4 to $N(A,B)$
    \begin{gather}
    \label{thisistheend}
    N(A,B)_{\substack{
    d_2=d_3= 1 \\
    e_2=e_3=1
    }}
    =
    c_2\frac{AB}{(\log A)^{\frac{1}{3}}(\log B)^{\frac{1}{3}}}
    \left(1+\OO\bigg(\frac{1}{\log A} + \frac{1}{\log B}\bigg)\right),
    \end{gather}
    where 
    $$
    c_2 = c_1\cdot \Big(\Gamma\Big(\frac{2}{3}\Big)\Big)^{-2} 
    = \Big(\Gamma\Big(\frac{2}{3}\Big)\Big)^{-2}\prod_{p}\left(1+\frac{2}{3^{\omega_1(p)}p}\right)\left(1- \frac{1}{p}\right)^{\frac{4}{3}}.
    $$
    
    We end the proof by choosing the parameters $V$, $C$ and $k$ in terms of $\kappa_1$, $\kappa_2$ and $\delta$, such that the contributions of cases one to three fit into the error term of the theorem, which is the error term in \eqref{thisistheend}. As remarked before, $k$ can be chosen as any integer larger than $2$, we choose $k = 4$. A calculation shows that 
    $$
    V = (\log AB)^{\frac{\kappa_2 + 4}{\kappa_1}} \quad \text{and} \quad C = 1 + \frac{1}{\delta}\Big(\frac{2\kappa_2 + 8}{\kappa_1}+\frac{5}{3}\Big)
    $$
    are sufficient to bound the contributions \eqref{Case1} and \eqref{Case3b} of cases 1 and 3b, and that the contributions \eqref{Case2} and \eqref{Case3a} of cases 2 and 3a  are compatible with these choices. Note that we have made extensive use of the assumption that $A,B \geq \exp ((\log AB)^{\delta})$ in the calculation of $V$ and $C$.

\appendix
\section{List of admissible pairs and code}
\label{List of admissable pairs}
In the proof of Lemma \ref{p nmid ab} we use two small pieces of code to check for solutions of $\phi_{a,b}$ in $\mathbb{F}_p$ for $p = 2,5,7$ (where $p \nmid ab$) and in $\sfrac{\Z}{27\Z}$. The code was written in the IDE Juypter, based on Python.

The following code iterates over forms $\phi_{a,b} \bmod p$ and triples $(x,y,z) \bmod p$ until it finds a non-zero solution of $\phi_{a,b}$. We use this to confirm that all forms have a solution in $\mathbb{F}_p$ for $p = 2,5,7$.
\begin{lstlisting}[language=Python, basicstyle=\small, showstringspaces=false]
import itertools as it
Allsolvable = 0
p = 7   # can choose p=2,5,7 here

for a,b in it.product(range(1,p), repeat=2):
    for x,y,z in it.product(range(p), repeat=3):
        if  (x,y,z)!=(0,0,0) and (a*(x**3)+b*(y**3)-(z**3))%p == 0:
            Allsolvable += 1
            break

if Allsolvable == (p-1)**2:
    print('All forms have a non-zero solution mod',p,'.')
\end{lstlisting}
As shown in the proof of Lemma \ref{p nmid ab}, a form $\phi_{a,b}$ with coefficients $(a,b)$ not divisible by three has a solution in $\Q_3$ iff it has a solution $(x,y,z)\bmod 27$ that is not equivalent to $(0,0,0) \bmod 3$. The following code iterates over coefficient pairs $(a,b)$ and triples $(x,y,z)$ mod 27 to compile the list of pairs $(a,b)$ such that $\phi_{a,b}$ has a solution in $\Q_3$.
\begin{lstlisting}[language=Python, basicstyle=\small, showstringspaces=false]
import itertools as it
Admissible_pairs = []

for a,b in it.product(range(27), repeat=2):
    if a%3 == 0 or b%3 == 0:
        continue
    for x,y,z in it.product(range(27), repeat=3):
        if x%3 + y%3 + z%3 == 0:
            continue
        elif  (a*(x**3)+b*(y**3)-(z**3))%27 == 0:
            Admissible_pairs.append([a,b])
            break

print(Admissible_pairs)
\end{lstlisting}
The list produced by this code is given below. We call it the list of admissible pairs.
\begin{flushleft}
        \mylist
        (1,1) (1,2) (1,4) (1,5) (1,7) (1,8) (1,10) (1,11) (1,13) (1,14) (1,16) (1,17) (1,19) (1,20) (1,22) (1,23) (1,25) (1,26) (2,1) (2,2) (2,7) (2,8) (2,10) (2,11) (2,16) (2,17) (2,19) (2,20) (2,25) (2,26) (4,1) (4,4) (4,5) (4,8) (4,10) (4,13) (4,14) (4,17) (4,19) (4,22) (4,23) (4,26) (5,1) (5,4) (5,5) (5,8) (5,10) (5,13) (5,14) (5,17) (5,19) (5,22) (5,23) (5,26) (7,1) (7,2) (7,7) (7,8) (7,10) (7,11) (7,16) (7,17) (7,19) (7,20) (7,25) (7,26) (8,1) (8,2) (8,4) (8,5) (8,7) (8,8) (8,10) (8,11) (8,13) (8,14) (8,16) (8,17) (8,19) (8,20) (8,22) (8,23) (8,25) (8,26) (10,1) (10,2) (10,4) (10,5) (10,7) (10,8) (10,10) (10,11) (10,13) (10,14) (10,16) (10,17) (10,19) (10,20) (10,22) (10,23) (10,25) (10,26) (11,1) (11,2) (11,7) (11,8) (11,10) (11,11) (11,16) (11,17) (11,19) (11,20) (11,25) (11,26) (13,1) (13,4) (13,5) (13,8) (13,10) (13,13) (13,14) (13,17) (13,19) (13,22) (13,23) (13,26) (14,1) (14,4) (14,5) (14,8) (14,10) (14,13) (14,14) (14,17) (14,19) (14,22) (14,23) (14,26) (16,1) (16,2) (16,7) (16,8) (16,10) (16,11) (16,16) (16,17) (16,19) (16,20) (16,25) (16,26) (17,1) (17,2) (17,4) (17,5) (17,7) (17,8) (17,10) (17,11) (17,13) (17,14) (17,16) (17,17) (17,19) (17,20) (17,22) (17,23) (17,25) (17,26) (19,1) (19,2) (19,4) (19,5) (19,7) (19,8) (19,10) (19,11) (19,13) (19,14) (19,16) (19,17) (19,19) (19,20) (19,22) (19,23) (19,25) (19,26) (20,1) (20,2) (20,7) (20,8) (20,10) (20,11) (20,16) (20,17) (20,19) (20,20) (20,25) (20,26) (22,1) (22,4) (22,5) (22,8) (22,10) (22,13) (22,14) (22,17) (22,19) (22,22) (22,23) (22,26) (23,1) (23,4) (23,5) (23,8) (23,10) (23,13) (23,14) (23,17) (23,19) (23,22) (23,23) (23,26) (25,1) (25,2) (25,7) (25,8) (25,10) (25,11) (25,16) (25,17) (25,19) (25,20) (25,25) (25,26) (26,1) (26,2) (26,4) (26,5) (26,7) (26,8) (26,10) (26,11) (26,13) (26,14) (26,16) (26,17) (26,19) (26,20) (26,22) (26,23) (26,25) (26,26)  !
    \end{flushleft}
    There are 252 admissible pairs out of $18^2=324$ pairs $(a,b)$ mod 27 that satisfy $3\nmid ab$. For any $a \bmod 27$ there are either 12 or 18 admissible pairs of the form $(a, \cdot)$.
\end{document}